\tikzstyle{vertex}=[circle, draw, inner sep=0pt, minimum size=6pt]
\newtheorem{definition}{Definition}
\newtheorem{thm}{Theorem}
\newtheorem{lemma}{Lemma}[section]
\newtheorem{fact}{Fact}
\newtheorem{example}{Example}
\newcommand{\Z}{\mathbb Z}
\newcommand{\binomial}{\genfrac{[}{]}{0pt}{}}
\newcommand{\NCC}{\text{NCC}}
\title{\textbf{Cyclic Sieving of Matchings}}
\author[1]{Qingzhong Liang \thanks{qzliang@gatech.edu}} 
\author[2]{Grant Bowling \thanks{gbowling@usc.edu}} 
\affil[1]{Department of Mathematics, Georgia Institute of Technology}
\affil[2]{Department of Mathematics, University of Southern California}
\date{}
\begin{document}
	\maketitle
	
	\begin{abstract}
		The cyclic sieving phenomenon (CSP) was introduced by Reiner, Stanton, and White to study combinatorial structures with actions of cyclic groups. The crucial step is to find a polynomial, for example a q-analog, that satisfies the CSP conditions for an action. This polynomial will give us a lot of information about the symmetry and structure of the set under the action. In this paper, we study the cyclic sieving phenomenon of the cyclic group $C_{2n}$ acting on $P_{n,k}$, which is the set of matchings of $2n$ points on a circle with $k$ crossings. The noncrossing matchings ($k=0$) was recently studied as a Catalan object. In this paper, we study more general cases, the matchings with more number of crossings. We prove that there exists $q$-analog polynomials $f_{n,k}(q)$ such that $(P_{n,k},f_{n,k},C_{2n})$ exhibits the cyclic sieving phenomenon for $k=1,2,3$. In the proof, we also introduce an efficient representation of the elements in $P_{n,k}$, which helps us to understand the symmetrical structure of the set.
	\end{abstract}
	
	\section{Introduction}
	The cyclic sieving phenomenon was first introduced by Reiner, Stanton, and White in \cite{CSP}. 
	
	\begin{definition}
		Let $X$ be a finite set, and $C_n$ be a cyclic group of order $n$ acting on $X$. Fix an isomorphism $\omega$ from $C_n$ to the $n$th roots of unity. Furthermore, let $f(q)$ be a polynomial in $q$. Then, the triple $(X,f(q),C_n)$ is said to exhibit the $\emph{cyclic sieving phenomenon (CSP)}$ if
		
		$$f(\omega(c))=|\{x\in X|c(x)=x\}|, \text{ for every } c\in C_n.$$ 
	\end{definition}
	
	The cyclic sieving phenomenon has been established for many interesting classes of objects. For example, the noncrossing partition of $n$-gon established in \cite{CSP}, and the annular noncrossing permutation established in \cite{kim}.
	
	Motivated by the recent results of CSP, we look at $P_n$, the set of matchings of $2n$ points on a circle. The structure of $P_n$ was studied by Lam in \cite{lam} as a poset. In this paper, we study the CSP on each level of the poset. For a matching $\tau\in P_n$, let $c(\tau)$ denote the number of crossings in $\tau$. Let $P_{n,k}=\{\tau\in P_n|c(\tau)=k\}$ be the subset of $P_n$ with $k$ crossings ($0\leqslant k\leqslant \frac{n(n+1)}{2}$). Consider the operation $\sigma_{2n}$ acting on the set $P_{n}$ by $\tau\mapsto \sigma_{2n}(\tau)$, where
	
	\begin{equation}
	\sigma_{2n}(\tau)(i)=
	\left\{\begin{array}{ll}
	\tau(2n)+1 & \text{ if } i=1\\
	\tau(i-1)+1\pmod{2n} & \text{ if } i\neq 1 
	\end{array}\right. .
	\end{equation}
	
	The following is an example of $\sigma_{2n}$ acting on a element $\tau$ in $P_{7,1}$, which is the set of matchings of $14$ points on a circle with one crossing. Assume $\tau$ is the following matching.
	
	\begin{center}
		\begin{tikzpicture}
		
		\def \n {14}
		\def \radius {2cm}
		
		\foreach \s in {\n,...,1}
		{
			\node at ({-360/\n * (\s-4)}:1.2*\radius) {$\s$} ;
			\draw[-, >=latex] ({360/\n * (\s - 1)}:\radius) 
			arc ({360/\n * (\s -1)}:{360/\n * (\s)}:\radius);
		}
		
		\foreach \from/\to in {2/3, 1/4, 6/7, 9/10,8/12, 13/14}
		\draw ({360/\n * (\from - 1)}:\radius) to [bend left=30] ({360/\n * (\to - 1)}:\radius);
		
		\foreach \from/\to in {5/11}
		\draw ({360/\n * (\from - 1)}:\radius) -- ({360/\n * (\to - 1)}:\radius);
		
		\end{tikzpicture}
	\end{center}
	
	After $\sigma_{2n}$ acted on the matching $\tau$, the edges are rotated clockwise by one unit and we get $\sigma_{2n}(\tau)$ as the following. For example, since $\tau(1)=4$ in the original matching $\tau$, so we have $\sigma_{2n}(\tau)(2)=\tau(2-1)+1=4+1=5$ in the new matching $\sigma_{2n}(\tau)$.
	
	\begin{center}
		\begin{tikzpicture}
		
		\def \n {14}
		\def \radius {2cm}
		
		\foreach \s in {\n,...,1}
		{
			\node at ({-360/\n * (\s-4)}:1.2*\radius) {$\s$} ;
			\draw[-, >=latex] ({360/\n * (\s - 1)}:\radius) 
			arc ({360/\n * (\s -1)}:{360/\n * (\s)}:\radius);
		}
		
		\foreach \from/\to in {1/2, 14/3, 5/6, 8/9, 7/11, 12/13}
		\draw ({360/\n * (\from - 1)}:\radius) to [bend left=30] ({360/\n * (\to - 1)}:\radius);
		
		\foreach \from/\to in {4/10}
		\draw ({360/\n * (\from - 1)}:\radius) -- ({360/\n * (\to - 1)}:\radius);
		
		\end{tikzpicture}
	\end{center}
	
	The rotation $\sigma_{2n}$ generates the cyclic group $C_{2n}$ with order $2n$. Let $C_{2n}$ act on the poset $P_n$, and we study the CSP on each level of $P_{n}$.
	
	In section 2, we state the main result of this paper. We prove that for $k=1,2,3$ (i.e. the matchings with one, two, and three crossings), there exists $q$-analog polynomials $f_{n,k}(q)$ such that the triples $(P_{n,k},f_{n,k}(q),C_{2n})$ exhibits the cyclic sieving phenomenon. In \cite{partitions}, Bergerson, Miller, Pliml, Reiner, Shearer, Stanton, and Switala studied one-crossing partitions. The method we use to represent the one-crossing mathchings in this paper and their way to count the one-crossing partitions are similar. We construct a bijection between $P_{n,1}$, the set of one-crossing matchings of $2n$ points, and the noncrossing partial matchings of $2n-4$ points. The authors of \cite{partitions} used a bijection between one crossing partitions of $[n]$ and noncrossing partitions of $[n]$ with $4$ special elements. We use our representation method to establish the CSP of one crossing mathcings $P_{n,1}$. Furthermore, we generalize our method in one crossing matchings, and apply it to the matchings with more number of crossings. Specially, we establish the CSP for two-crossing matchings and three-crossing mathchings of $2n$ points ($P_{n,2}$ and $P_{n,3}$).
	
	We provide our proof in section 3. Before the proof, we also introduce a method called Noncrossing Construction (NCC), which serves as an efficient representation of the elements in $P_{n,k}$ and will be used many times in our proof. 
	
	\section{Main Results}
	\subsection{One-crossing Matchings $P_{n,1}$}
	
	We prove via a bijective counting argument that $|P_{n,1}|=\binom{2n}{n-2}$. Let $f_{n,1}(q)$ be the following q-analog:
	
	\begin{equation}
	f_{n,1}(q)=\binomial{2n}{n-2}_q=\frac{[2n]_q[2n-1]_q\cdots [n+3]_q}{[n-2]_q[n-3]_q\cdots [1]_q}.
	\end{equation}
	
	By the properties of $q$-analog, we have $f_{n,1}(1)=\binomial{2n}{n-2}_{q=1}=\binom{2n}{n-2}=|P_{n,1}|$, which is necessary for the CSP. We study this polynomial further and prove it satisfies the CSP conditions. 
	
	\subsection{Two-crossing Matchings $P_{n,2}$ and Three-crossing Matchings $P_{n,3}$}
	
	We also found the number of matchings of $P_{n,2}$ and $P_{n,3}$ by some combinatorial arguments. The idea is similar to the one-crossing case but requires further classification of the types of crossings. The results are the following.
	
	$$|P_{n,2}|=\frac{n+3}{2}\binom{2n}{n-3},$$ 
	
	$$|P_{n,3}|=\frac{1}{3}\binom{n+5}{2}\binom{2n}{n-4}+\binom{2n}{n-3}.$$
	
	Again, let $f_{n,2}(q)$ and $f_{n,3}(q)$ be the following q-analogs:
	
	$$f_{n,2}(q)=\frac{[n+3]_q}{[2]_q}\binomial{2n}{n-3}_q=\frac{[n+3]_q}{[2]_q}\cdot \frac{[2n]_q[2n-1]_q\cdots [n+4]_q}{[n-3]_q[n-4]_q\cdots [1]_q},$$
	
	$$f_{n,3}(q)=\frac{1}{[3]_q} \binomial{n+5}{2}_q \binomial{2n}{n-4}_q+\binomial{2n}{n-3}_q.$$
	
	\subsection{Cyclic Sieving Phenomenon}
	We prove the q-analog expressions $f_{n,k}(q)$ exhibits the cyclic sieving phenomenon for the operation of $\sigma_{2n}$ on the set $P_{n,k}$ for $k=1,2,3$.
	
	\begin{thm}\thlabel{main}
		Let $\xi_{2n}=e^{\frac{\pi i}{n}}$, which is a primitive root of the equation $x^{2n}-1=0$, then $f_{n,k}(\xi_{2n}^j)=|\{\tau\in P_{n,k}|\sigma_{2n}^j(\tau)=\tau\}|$ for $k=1,2,3$. In other words, $(P_{n,k},f_{n,k}(q),C_{2n})$ exhibits the cyclic sieving phenomenon for $k=1,2,3$.
	\end{thm}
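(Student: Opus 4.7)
The plan is to verify the CSP identity $f_{n,k}(\xi_{2n}^j)=|\{\tau\in P_{n,k}:\sigma_{2n}^j(\tau)=\tau\}|$ for every $j\in\{1,\dots,2n\}$ and each $k\in\{1,2,3\}$, using a common template: first, use the Noncrossing Construction (NCC) to encode a $k$-crossing matching as a noncrossing structure with $k$ marked ``crossing pairs''; second, read off the $\sigma_{2n}^j$-fixed matchings from the rotational symmetry of this encoded data; and third, evaluate $f_{n,k}$ at $2n$th roots of unity and match the two counts.

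\textbf{The analytic side.} Evaluating the $q$-analogs is the comparatively routine step. Using the standard fact that for a primitive $d$th root of unity $\zeta$,
\[
\binomial{a}{b}_q\Big|_{q=\zeta}=\binom{\lfloor a/d\rfloor}{\lfloor b/d\rfloor}
\]
when $d\mid\gcd(a-b,b)$ (and vanishes otherwise), each $f_{n,k}(\xi_{2n}^j)$ reduces to an ordinary binomial expression in $d=2n/\gcd(2n,j)$. The output of this step gives us a closed target to match on the combinatorial side.

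\textbf{The combinatorial side.} A matching fixed by $\sigma_{2n}^j$ must be invariant under the chord-rotation by $2n/d$ positions, so its chords split into orbits of size $d$ (or $d/2$ when a diameter orbit appears for even $d$), and the $k$ crossings must be distributed compatibly with this rotational symmetry. For $k=1$, the NCC gives a bijection between $P_{n,1}$ and noncrossing partial matchings of $2n-4$ points; since the latter can be repackaged as $(n-2)$-subsets of $[2n]$ inheriting the rotation, the CSP follows from the classical Reiner–Stanton–White result for $\binomial{2n}{n-2}_q$ once we check that our bijection intertwines the two rotations. For $k=2,3$ we run a parallel argument: the NCC presents each matching as a noncrossing configuration together with data recording the extra crossings, and we enumerate, case by case, which symmetric arrangements of those crossings are compatible with rotation of order $d$.

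\textbf{Main obstacle.} The hardest part will be $k=3$. The polynomial $f_{n,3}$ is a genuine sum of two terms, reflecting two structurally distinct types of three-crossing matchings (roughly, three chords mutually crossing each other versus configurations where the three crossings are distributed among more chords). Each type must be treated separately, and we will need (a) a clean split of $P_{n,3}$ into these types that emerges naturally from the NCC encoding, and (b) a careful accounting of which rotational orders $d$ admit symmetric representatives of each type, as certain symmetries force or forbid particular crossing patterns. The delicate bookkeeping is to match these two combinatorial subcounts term-by-term with the two summands $\tfrac{1}{[3]_q}\binomial{n+5}{2}_q\binomial{2n}{n-4}_q$ and $\binomial{2n}{n-3}_q$; the second summand in particular suggests that one of the structural types is in cyclic-rotation-equivariant bijection with $(n-3)$-subsets of $[2n]$, and identifying that bijection explicitly is where I expect most of the work to lie.
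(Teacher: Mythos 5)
Your overall architecture --- encode via NCC, classify the fixed matchings by rotational symmetry type, evaluate the $q$-analogs at roots of unity, and match the two counts --- is the same as the paper's. Your $k=1$ shortcut is genuinely cleaner than what the paper does: the paper's bijection $\tau\mapsto S_\tau$ satisfies $S_{\sigma_{2n}(\tau)}=S_\tau+1\pmod{2n}$, so it is rotation-equivariant and the CSP for $P_{n,1}$ does transfer directly from the Reiner--Stanton--White CSP for $(n-2)$-subsets of $[2n]$ under rotation; the paper instead recomputes the fixed-point counts $|A_n|$ and $|A_{n/2}|$ by hand and re-verifies the evaluations of $\binomial{2n}{n-2}_q$ from scratch. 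For $k=2,3$ your plan coincides with the paper's in outline, but the substance is exactly the part you defer: the paper partitions $P_{n,2}$ into types $T_k$ ($3\le k\le n$) according to the number of chords separating the two crossings, proves $|T_k|=k\binom{2n}{n-k}$ and $|B_n\cap T_k|=k\binom{n}{(n-k)/2}$, and for three crossings introduces a second family of types $R_k$ to count the matchings fixed by the order-$3$ rotation, obtaining $|F\cap R_k|=2k\binom{2n/3}{(n-3k)/3}$. Your guess that the summand $\binomial{2n}{n-3}_q$ reflects a distinguished structural type is consistent with the paper's $R_1$ analysis, but identifying and counting these types is where essentially all of the combinatorial work lies, and your proposal only gestures at it.

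One concrete step in your plan would fail as literally stated: the evaluation formula $\binomial{a}{b}_q\big|_{q=\zeta}=\binom{\lfloor a/d\rfloor}{\lfloor b/d\rfloor}$ applies to $q$-binomials, but $f_{n,2}$ and $f_{n,3}$ are not products of $q$-binomials. The factor $[n+3]_q/[2]_q$ in $f_{n,2}$ has a pole at $q=-1$ whenever $n$ is even (odd numerator, even denominator), compensated only by a zero of $\binomial{2n}{n-3}_q$; likewise $1/[3]_q$ in $f_{n,3}$ has a pole at the primitive cube roots of unity, compensated by zeros of the accompanying $q$-binomials. So you cannot evaluate factor-by-factor at exactly the roots of unity that matter; you must first re-pair each vanishing numerator term $[m_1]_q$ with a vanishing denominator term $[m_2]_q$ and use $\frac{[m_1]_q}{[m_2]_q}(\zeta)=\frac{m_1/d}{m_2/d}$, which is precisely what the paper's elementary facts about $[m]_q$ at $q=-1$ and $q=e^{2\pi i/3}$, together with its explicit rearrangement of the products, are doing. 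This is fixable, but it is a necessary extra argument rather than a routine application of the cited formula.
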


	\section{Proof of the Main Results}
	
	\subsection{Noncrossing Construction ($\NCC$)}
	Before we start to prove the one-crossing case, we first introduce an important idea throughout this paper. The idea is motivated by the proof of $|P_{n,1}|=\binom{2n}{n-2}$. We want to find a bijection between choosing $(n-2)$-element subsets of $2n$ elements and matchings in $P_{n,1}$. 
	
	To achieve this goal, we first define a general algorithm $\textbf{NCC}(n,S)$(Noncrossing Construction), which takes an integer $n\in\Z_{>0}$ and a subset $S\subseteq \{1,2,\cdots,2n\}$ such that $|S|\leqslant n$ as inputs, and returns the corresponding noncrossing partial matching. It turns out that applying this algorithm to $|S|=n-2$ for each fixed $n$ allows us to construct the bijection between $(n-2)$-element subsets of $2n$ elements and matchings in $P_{n,1}$, based on which we establish the CSP of $P_{n,1}$. In the later sections, we will need to apply this algorithm to more general cases ($0\leqslant |S|\leqslant n$), in order to establish the CSP of $P_{n,2}$ and $P_{n,3}$. 
	
	\noindent\makebox[\linewidth]{\rule{\textwidth}{0.4pt}}	
	
	\underline{$\NCC(n,S):$}
	
	\textbf{Input} An integer number $n$, and a subset $S\subseteq\{1,2,\dots,2n\}$ such that $|S|=k\leqslant n$.
	
	\textbf{Progress} Consider $2n$ points, which are labeled in clockwise order on a circle as $\{1,\dots, 2n\}$. Look at each point $i\in S$. First, if the point $i+1 \pmod n$ is not in $S$, then we match $i$ with $i+1$. Then for each unmatched point $i\in S$, we look at the point $i+3$. We match $i$ with $i+3$ if $i+3\notin S$ and $i+3$ has not been matched yet. We continue this process, next trying to match the unmatched points $i$ in $S$ with $i+5$, increasing the target point each time by 2. The end result in this process will be that $2(n-k)$ of the points will be paired, which gives a noncrossing partial matching with $2k$ points unmatched.
	
	\textbf{Output} The noncrossing partial matching with $n-k$ pairs of matched points and $2k$ points unmatched.
	\noindent\makebox[\linewidth]{\rule{\textwidth}{0.4pt}}
	
	Now, for each fixed $n$, we consider $S\subseteq \{1,2,\dots,2n\}$ and explain how $\NCC(n,S)$ gives us a element in $P_{n,1}$ for each $S$. Note that the end results of $\NCC(n,S)$ will always be a noncrossing partial matching that $2(n-2) = 2n-4$ of the points are paired, and $4$ points are unmatched. Thus, there is exactly one way to connect the remaining four points to get a cross, which gives us an element in $P_{n,1}$.
	
	
	

	
	\begin{example}\thlabel{noncrossingexample}
	We give an example with $n = 7$ and $S = \{1,2,3,9,12\}$. After we chose the $5$-element subset $S\subseteq\{1,2,\cdots,14\}$, we apply the algorithm $\NCC(n,S)$ to figure out $\NCC(7, \{1,2,3,9,12\})$, and then get an one crossing mathching from it. The five points of $S$ are circled in the following graphs.
	
	\begin{center}
		\begin{tikzpicture}
		
		\def \n {14}
		\def \radius {2cm}
		
		
		\node at ({-360/\n * (1-4)}:1.2*\radius) {$\textcircled{1}$} ;
		\draw[-, >=latex] ({360/\n * (1 - 1)}:\radius) 
		arc ({360/\n * (1 -1)}:{360/\n * (1)}:\radius);
		
		\node at ({-360/\n * (2-4)}:1.2*\radius) {$\textcircled{2}$} ;
		\draw[-, >=latex] ({360/\n * (2 - 1)}:\radius) 
		arc ({360/\n * (2 -1)}:{360/\n * (2)}:\radius);
		
		\node at ({-360/\n * (3-4)}:1.2*\radius) {$\textcircled{3}$} ;
		\draw[-, >=latex] ({360/\n * (3 - 1)}:\radius) 
		arc ({360/\n * (3 -1)}:{360/\n * (3)}:\radius);
		
		\node at ({-360/\n * (4-4)}:1.2*\radius) {$4$} ;
		\draw[-, >=latex] ({360/\n * (4 - 1)}:\radius) 
		arc ({360/\n * (4 -1)}:{360/\n * (4)}:\radius);
		
		\node at ({-360/\n * (5-4)}:1.2*\radius) {$5$} ;
		\draw[-, >=latex] ({360/\n * (5 - 1)}:\radius) 
		arc ({360/\n * (5 -1)}:{360/\n * (5)}:\radius);
		
		\node at ({-360/\n * (6-4)}:1.2*\radius) {$6$} ;
		\draw[-, >=latex] ({360/\n * (6 - 1)}:\radius) 
		arc ({360/\n * (6 -1)}:{360/\n * (6)}:\radius);
		
		\node at ({-360/\n * (7-4)}:1.2*\radius) {$7$} ;
		\draw[-, >=latex] ({360/\n * (7 - 1)}:\radius) 
		arc ({360/\n * (7 -1)}:{360/\n * (7)}:\radius);
		
		\node at ({-360/\n * (8-4)}:1.2*\radius) {$8$} ;
		\draw[-, >=latex] ({360/\n * (8 - 1)}:\radius) 
		arc ({360/\n * (8 -1)}:{360/\n * (8)}:\radius);
		
		\node at ({-360/\n * (9-4)}:1.2*\radius) {$\textcircled{9}$} ;
		\draw[-, >=latex] ({360/\n * (9 - 1)}:\radius) 
		arc ({360/\n * (9 -1)}:{360/\n * (9)}:\radius);
		
		\node at ({-360/\n * (10-4)}:1.2*\radius) {$10$} ;
		\draw[-, >=latex] ({360/\n * (10 - 1)}:\radius) 
		arc ({360/\n * (10 -1)}:{360/\n * (10)}:\radius);
		
		\node at ({-360/\n * (11-4)}:1.2*\radius) {$11$} ;
		\draw[-, >=latex] ({360/\n * (11 - 1)}:\radius) 
		arc ({360/\n * (11 -1)}:{360/\n * (11)}:\radius);
		
		\node at ({-360/\n * (12-4)}:1.2*\radius) {$\textcircled{12}$} ;
		\draw[-, >=latex] ({360/\n * (12 - 1)}:\radius) 
		arc ({360/\n * (12 -1)}:{360/\n * (12)}:\radius);
		
		\node at ({-360/\n * (13-4)}:1.2*\radius) {$13$} ;
		\draw[-, >=latex] ({360/\n * (13 - 1)}:\radius) 
		arc ({360/\n * (13 -1)}:{360/\n * (13)}:\radius);
		
		\node at ({-360/\n * (14-4)}:1.2*\radius) {$14$} ;
		\draw[-, >=latex] ({360/\n * (14 - 1)}:\radius) 
		arc ({360/\n * (14 -1)}:{360/\n * (14)}:\radius);
		
		\foreach \from/\to in {}
		\draw ({360/\n * (\from - 1)}:\radius) to [bend left=30] ({360/\n * (\to - 1)}:\radius);
		
		\foreach \from/\to in {}
		\draw ({360/\n * (\from - 1)}:\radius) -- ({360/\n * (\to - 1)}:\radius);
		
		\end{tikzpicture}
	\end{center}
	
	First, we look at each $i\in S$ to see whether we can match $i$ with $i+1$. 
	\begin{itemize}
		\item $1\in S$, but $1+1=2$ is also in $S$, so we cannot match $1$ with $2$;
		\item $2\in S$, but $2+1=3$ is also in $S$, so we cannot match $2$ with $3$;
		\item $3\in S$ and $3+1=4\notin S$, so we match $3$ with $4$;
		\item $9\in S$ and $9+1=10\notin S$, so we match $9$ with $10$;
		\item $12\in S$ and $12+1=13\notin S$, so we match $12$ with $13$.
	\end{itemize} 
	
	\begin{center}
		\begin{tikzpicture}
		
		\def \n {14}
		\def \radius {2cm}
		
		\node at ({-360/\n * (1-4)}:1.2*\radius) {$\textcircled{1}$} ;
		\draw[-, >=latex] ({360/\n * (1 - 1)}:\radius) 
		arc ({360/\n * (1 -1)}:{360/\n * (1)}:\radius);
		
		\node at ({-360/\n * (2-4)}:1.2*\radius) {$\textcircled{2}$} ;
		\draw[-, >=latex] ({360/\n * (2 - 1)}:\radius) 
		arc ({360/\n * (2 -1)}:{360/\n * (2)}:\radius);
		
		\node at ({-360/\n * (3-4)}:1.2*\radius) {$\textcircled{3}$} ;
		\draw[-, >=latex] ({360/\n * (3 - 1)}:\radius) 
		arc ({360/\n * (3 -1)}:{360/\n * (3)}:\radius);
		
		\node at ({-360/\n * (4-4)}:1.2*\radius) {$4$} ;
		\draw[-, >=latex] ({360/\n * (4 - 1)}:\radius) 
		arc ({360/\n * (4 -1)}:{360/\n * (4)}:\radius);
		
		\node at ({-360/\n * (5-4)}:1.2*\radius) {$5$} ;
		\draw[-, >=latex] ({360/\n * (5 - 1)}:\radius) 
		arc ({360/\n * (5 -1)}:{360/\n * (5)}:\radius);
		
		\node at ({-360/\n * (6-4)}:1.2*\radius) {$6$} ;
		\draw[-, >=latex] ({360/\n * (6 - 1)}:\radius) 
		arc ({360/\n * (6 -1)}:{360/\n * (6)}:\radius);
		
		\node at ({-360/\n * (7-4)}:1.2*\radius) {$7$} ;
		\draw[-, >=latex] ({360/\n * (7 - 1)}:\radius) 
		arc ({360/\n * (7 -1)}:{360/\n * (7)}:\radius);
		
		\node at ({-360/\n * (8-4)}:1.2*\radius) {$8$} ;
		\draw[-, >=latex] ({360/\n * (8 - 1)}:\radius) 
		arc ({360/\n * (8 -1)}:{360/\n * (8)}:\radius);
		
		\node at ({-360/\n * (9-4)}:1.2*\radius) {$\textcircled{9}$} ;
		\draw[-, >=latex] ({360/\n * (9 - 1)}:\radius) 
		arc ({360/\n * (9 -1)}:{360/\n * (9)}:\radius);
		
		\node at ({-360/\n * (10-4)}:1.2*\radius) {$10$} ;
		\draw[-, >=latex] ({360/\n * (10 - 1)}:\radius) 
		arc ({360/\n * (10 -1)}:{360/\n * (10)}:\radius);
		
		\node at ({-360/\n * (11-4)}:1.2*\radius) {$11$} ;
		\draw[-, >=latex] ({360/\n * (11 - 1)}:\radius) 
		arc ({360/\n * (11 -1)}:{360/\n * (11)}:\radius);
		
		\node at ({-360/\n * (12-4)}:1.2*\radius) {$\textcircled{12}$} ;
		\draw[-, >=latex] ({360/\n * (12 - 1)}:\radius) 
		arc ({360/\n * (12 -1)}:{360/\n * (12)}:\radius);
		
		\node at ({-360/\n * (13-4)}:1.2*\radius) {$13$} ;
		\draw[-, >=latex] ({360/\n * (13 - 1)}:\radius) 
		arc ({360/\n * (13 -1)}:{360/\n * (13)}:\radius);
		
		\node at ({-360/\n * (14-4)}:1.2*\radius) {$14$} ;
		\draw[-, >=latex] ({360/\n * (14 - 1)}:\radius) 
		arc ({360/\n * (14 -1)}:{360/\n * (14)}:\radius);
		
		\foreach \from/\to in {1/2, 6/7, 9/10}
		\draw ({360/\n * (\from - 1)}:\radius) to [bend left=30] ({360/\n * (\to - 1)}:\radius);
		
		\foreach \from/\to in {}
		\draw ({360/\n * (\from - 1)}:\radius) -- ({360/\n * (\to - 1)}:\radius);
		
		\end{tikzpicture}
	\end{center}
	
	Now, for each unmatched vertex $i$ in $S$, we look at $i+3$ to see whether we can match $i$ with $i+3$. There are two unmatched vertices in $S$, which are $1$ and $2$.
	\begin{itemize}
		\item $1\in S$, but $1+3=4$ is matched with $3$, so we cannot match $1$ with $4$;
		\item $2\in S$, $2+3=5\notin S$, and $5$ is unmatched, so we can match $2$ with $5$.
	\end{itemize} 
	
	\begin{center}
		\begin{tikzpicture}
		
		\def \n {14}
		\def \radius {2cm}
		
		\node at ({-360/\n * (1-4)}:1.2*\radius) {$\textcircled{1}$} ;
		\draw[-, >=latex] ({360/\n * (1 - 1)}:\radius) 
		arc ({360/\n * (1 -1)}:{360/\n * (1)}:\radius);
		
		\node at ({-360/\n * (2-4)}:1.2*\radius) {$\textcircled{2}$} ;
		\draw[-, >=latex] ({360/\n * (2 - 1)}:\radius) 
		arc ({360/\n * (2 -1)}:{360/\n * (2)}:\radius);
		
		\node at ({-360/\n * (3-4)}:1.2*\radius) {$\textcircled{3}$} ;
		\draw[-, >=latex] ({360/\n * (3 - 1)}:\radius) 
		arc ({360/\n * (3 -1)}:{360/\n * (3)}:\radius);
		
		\node at ({-360/\n * (4-4)}:1.2*\radius) {$4$} ;
		\draw[-, >=latex] ({360/\n * (4 - 1)}:\radius) 
		arc ({360/\n * (4 -1)}:{360/\n * (4)}:\radius);
		
		\node at ({-360/\n * (5-4)}:1.2*\radius) {$5$} ;
		\draw[-, >=latex] ({360/\n * (5 - 1)}:\radius) 
		arc ({360/\n * (5 -1)}:{360/\n * (5)}:\radius);
		
		\node at ({-360/\n * (6-4)}:1.2*\radius) {$6$} ;
		\draw[-, >=latex] ({360/\n * (6 - 1)}:\radius) 
		arc ({360/\n * (6 -1)}:{360/\n * (6)}:\radius);
		
		\node at ({-360/\n * (7-4)}:1.2*\radius) {$7$} ;
		\draw[-, >=latex] ({360/\n * (7 - 1)}:\radius) 
		arc ({360/\n * (7 -1)}:{360/\n * (7)}:\radius);
		
		\node at ({-360/\n * (8-4)}:1.2*\radius) {$8$} ;
		\draw[-, >=latex] ({360/\n * (8 - 1)}:\radius) 
		arc ({360/\n * (8 -1)}:{360/\n * (8)}:\radius);
		
		\node at ({-360/\n * (9-4)}:1.2*\radius) {$\textcircled{9}$} ;
		\draw[-, >=latex] ({360/\n * (9 - 1)}:\radius) 
		arc ({360/\n * (9 -1)}:{360/\n * (9)}:\radius);
		
		\node at ({-360/\n * (10-4)}:1.2*\radius) {$10$} ;
		\draw[-, >=latex] ({360/\n * (10 - 1)}:\radius) 
		arc ({360/\n * (10 -1)}:{360/\n * (10)}:\radius);
		
		\node at ({-360/\n * (11-4)}:1.2*\radius) {$11$} ;
		\draw[-, >=latex] ({360/\n * (11 - 1)}:\radius) 
		arc ({360/\n * (11 -1)}:{360/\n * (11)}:\radius);
		
		\node at ({-360/\n * (12-4)}:1.2*\radius) {$\textcircled{12}$} ;
		\draw[-, >=latex] ({360/\n * (12 - 1)}:\radius) 
		arc ({360/\n * (12 -1)}:{360/\n * (12)}:\radius);
		
		\node at ({-360/\n * (13-4)}:1.2*\radius) {$13$} ;
		\draw[-, >=latex] ({360/\n * (13 - 1)}:\radius) 
		arc ({360/\n * (13 -1)}:{360/\n * (13)}:\radius);
		
		\node at ({-360/\n * (14-4)}:1.2*\radius) {$14$} ;
		\draw[-, >=latex] ({360/\n * (14 - 1)}:\radius) 
		arc ({360/\n * (14 -1)}:{360/\n * (14)}:\radius);
		
		\foreach \from/\to in {1/2, 6/7, 9/10, 14/3}
		\draw ({360/\n * (\from - 1)}:\radius) to [bend left=30] ({360/\n * (\to - 1)}:\radius);
		\end{tikzpicture}
	\end{center}
	
	Now, for each unmatched vertex $i$ in $S$, we look at $i+5$ to see whether we can match $i$ with $i+5$. The only one unmatched vertex in $S$ id $1$.
	\begin{itemize}
		\item $1\in S$, $1+5=6\notin S$, and $6$ is unmatched, so we can match $1$ with $6$.
	\end{itemize} 
	
	\begin{center}
		\begin{tikzpicture}
		
		\def \n {14}
		\def \radius {2cm}
		
		\node at ({-360/\n * (1-4)}:1.2*\radius) {$\textcircled{1}$} ;
		\draw[-, >=latex] ({360/\n * (1 - 1)}:\radius) 
		arc ({360/\n * (1 -1)}:{360/\n * (1)}:\radius);
		
		\node at ({-360/\n * (2-4)}:1.2*\radius) {$\textcircled{2}$} ;
		\draw[-, >=latex] ({360/\n * (2 - 1)}:\radius) 
		arc ({360/\n * (2 -1)}:{360/\n * (2)}:\radius);
		
		\node at ({-360/\n * (3-4)}:1.2*\radius) {$\textcircled{3}$} ;
		\draw[-, >=latex] ({360/\n * (3 - 1)}:\radius) 
		arc ({360/\n * (3 -1)}:{360/\n * (3)}:\radius);
		
		\node at ({-360/\n * (4-4)}:1.2*\radius) {$4$} ;
		\draw[-, >=latex] ({360/\n * (4 - 1)}:\radius) 
		arc ({360/\n * (4 -1)}:{360/\n * (4)}:\radius);
		
		\node at ({-360/\n * (5-4)}:1.2*\radius) {$5$} ;
		\draw[-, >=latex] ({360/\n * (5 - 1)}:\radius) 
		arc ({360/\n * (5 -1)}:{360/\n * (5)}:\radius);
		
		\node at ({-360/\n * (6-4)}:1.2*\radius) {$6$} ;
		\draw[-, >=latex] ({360/\n * (6 - 1)}:\radius) 
		arc ({360/\n * (6 -1)}:{360/\n * (6)}:\radius);
		
		\node at ({-360/\n * (7-4)}:1.2*\radius) {$7$} ;
		\draw[-, >=latex] ({360/\n * (7 - 1)}:\radius) 
		arc ({360/\n * (7 -1)}:{360/\n * (7)}:\radius);
		
		\node at ({-360/\n * (8-4)}:1.2*\radius) {$8$} ;
		\draw[-, >=latex] ({360/\n * (8 - 1)}:\radius) 
		arc ({360/\n * (8 -1)}:{360/\n * (8)}:\radius);
		
		\node at ({-360/\n * (9-4)}:1.2*\radius) {$\textcircled{9}$} ;
		\draw[-, >=latex] ({360/\n * (9 - 1)}:\radius) 
		arc ({360/\n * (9 -1)}:{360/\n * (9)}:\radius);
		
		\node at ({-360/\n * (10-4)}:1.2*\radius) {$10$} ;
		\draw[-, >=latex] ({360/\n * (10 - 1)}:\radius) 
		arc ({360/\n * (10 -1)}:{360/\n * (10)}:\radius);
		
		\node at ({-360/\n * (11-4)}:1.2*\radius) {$11$} ;
		\draw[-, >=latex] ({360/\n * (11 - 1)}:\radius) 
		arc ({360/\n * (11 -1)}:{360/\n * (11)}:\radius);
		
		\node at ({-360/\n * (12-4)}:1.2*\radius) {$\textcircled{12}$} ;
		\draw[-, >=latex] ({360/\n * (12 - 1)}:\radius) 
		arc ({360/\n * (12 -1)}:{360/\n * (12)}:\radius);
		
		\node at ({-360/\n * (13-4)}:1.2*\radius) {$13$} ;
		\draw[-, >=latex] ({360/\n * (13 - 1)}:\radius) 
		arc ({360/\n * (13 -1)}:{360/\n * (13)}:\radius);
		
		\node at ({-360/\n * (14-4)}:1.2*\radius) {$14$} ;
		\draw[-, >=latex] ({360/\n * (14 - 1)}:\radius) 
		arc ({360/\n * (14 -1)}:{360/\n * (14)}:\radius);
		
		\foreach \from/\to in {1/2, 6/7, 9/10, 14/3, 13/4}
		\draw ({360/\n * (\from - 1)}:\radius) to [bend left=30] ({360/\n * (\to - 1)}:\radius);
		\end{tikzpicture}
	\end{center}
	
	Thus, $\NCC(7,\{1,2,3,9,12\})$ returns the noncrossing partial matching $(1,6)(2,5)(3,4)(9,10)(12,13)$. There are only four points $7,8,11,14$ left, which give us a unique way to get a one-crossing matching $(7,11)(8,14)$ as shown below. So the corresponding element in $P_{n,1}$ is $(1,6)(2,5)(3,4)(9,10)(12,13)(7,11)(8,14)$.
	
	\begin{center}
		\begin{tikzpicture}
		
		\def \n {14}
		\def \radius {2cm}
		
		\node at ({-360/\n * (1-4)}:1.2*\radius) {$\textcircled{1}$} ;
		\draw[-, >=latex] ({360/\n * (1 - 1)}:\radius) 
		arc ({360/\n * (1 -1)}:{360/\n * (1)}:\radius);
		
		\node at ({-360/\n * (2-4)}:1.2*\radius) {$\textcircled{2}$} ;
		\draw[-, >=latex] ({360/\n * (2 - 1)}:\radius) 
		arc ({360/\n * (2 -1)}:{360/\n * (2)}:\radius);
		
		\node at ({-360/\n * (3-4)}:1.2*\radius) {$\textcircled{3}$} ;
		\draw[-, >=latex] ({360/\n * (3 - 1)}:\radius) 
		arc ({360/\n * (3 -1)}:{360/\n * (3)}:\radius);
		
		\node at ({-360/\n * (4-4)}:1.2*\radius) {$4$} ;
		\draw[-, >=latex] ({360/\n * (4 - 1)}:\radius) 
		arc ({360/\n * (4 -1)}:{360/\n * (4)}:\radius);
		
		\node at ({-360/\n * (5-4)}:1.2*\radius) {$5$} ;
		\draw[-, >=latex] ({360/\n * (5 - 1)}:\radius) 
		arc ({360/\n * (5 -1)}:{360/\n * (5)}:\radius);
		
		\node at ({-360/\n * (6-4)}:1.2*\radius) {$6$} ;
		\draw[-, >=latex] ({360/\n * (6 - 1)}:\radius) 
		arc ({360/\n * (6 -1)}:{360/\n * (6)}:\radius);
		
		\node at ({-360/\n * (7-4)}:1.2*\radius) {$7$} ;
		\draw[-, >=latex] ({360/\n * (7 - 1)}:\radius) 
		arc ({360/\n * (7 -1)}:{360/\n * (7)}:\radius);
		
		\node at ({-360/\n * (8-4)}:1.2*\radius) {$8$} ;
		\draw[-, >=latex] ({360/\n * (8 - 1)}:\radius) 
		arc ({360/\n * (8 -1)}:{360/\n * (8)}:\radius);
		
		\node at ({-360/\n * (9-4)}:1.2*\radius) {$\textcircled{9}$} ;
		\draw[-, >=latex] ({360/\n * (9 - 1)}:\radius) 
		arc ({360/\n * (9 -1)}:{360/\n * (9)}:\radius);
		
		\node at ({-360/\n * (10-4)}:1.2*\radius) {$10$} ;
		\draw[-, >=latex] ({360/\n * (10 - 1)}:\radius) 
		arc ({360/\n * (10 -1)}:{360/\n * (10)}:\radius);
		
		\node at ({-360/\n * (11-4)}:1.2*\radius) {$11$} ;
		\draw[-, >=latex] ({360/\n * (11 - 1)}:\radius) 
		arc ({360/\n * (11 -1)}:{360/\n * (11)}:\radius);
		
		\node at ({-360/\n * (12-4)}:1.2*\radius) {$\textcircled{12}$} ;
		\draw[-, >=latex] ({360/\n * (12 - 1)}:\radius) 
		arc ({360/\n * (12 -1)}:{360/\n * (12)}:\radius);
		
		\node at ({-360/\n * (13-4)}:1.2*\radius) {$13$} ;
		\draw[-, >=latex] ({360/\n * (13 - 1)}:\radius) 
		arc ({360/\n * (13 -1)}:{360/\n * (13)}:\radius);
		
		\node at ({-360/\n * (14-4)}:1.2*\radius) {$14$} ;
		\draw[-, >=latex] ({360/\n * (14 - 1)}:\radius) 
		arc ({360/\n * (14 -1)}:{360/\n * (14)}:\radius);
		
		\foreach \from/\to in {1/2, 6/7, 9/10, 14/3, 13/4, 8/12}
		\draw ({360/\n * (\from - 1)}:\radius) to [bend left=30] ({360/\n * (\to - 1)}:\radius);
		
		\foreach \from/\to in {5/11}
		\draw ({360/\n * (\from - 1)}:\radius) -- ({360/\n * (\to - 1)}:\radius);
		
		\end{tikzpicture}
	\end{center}
	\end{example}
	
	\subsection{One-crossing Matchings}
	
	\begin{lemma}\thlabel{bijection}
		$|P_{n,1}|=\binom{2n}{n-2}$.
	\end{lemma}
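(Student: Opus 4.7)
The approach is to construct an explicit bijection $\Phi : \binom{\{1,\ldots,2n\}}{n-2} \to P_{n,1}$: given a subset $S$ of size $n-2$, define $\Phi(S)$ as the matching obtained by running $\NCC(n,S)$ and then joining the four unmatched points $a<b<c<d$ by the chords $(a,c)$ and $(b,d)$, the unique pairing of four cyclic points that produces a crossing. The identity $|P_{n,1}| = \binom{2n}{n-2}$ is immediate once $\Phi$ is shown to be a bijection.

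To see $\Phi$ is well-defined into $P_{n,1}$, I would use that $\NCC(n,S)$ with $|S|=n-2$ returns a noncrossing partial matching with $n-2$ arcs and $4$ unmatched points. I would then argue the newly added chords $(a,c),(b,d)$ create no unwanted crossings with the NCC-arcs: the key structural feature of NCC is that each arc $(i,i+2k+1)$ it produces has its $2k$ clockwise-inside points already fully matched among themselves before this arc is added. Consequently, all four unmatched points lie on the outside of every NCC-arc, so $(a,c)$ and $(b,d)$ cross only each other and $\Phi(S) \in P_{n,1}$.

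For the inverse $\Psi : P_{n,1}\to\binom{\{1,\ldots,2n\}}{n-2}$, given $\tau\in P_{n,1}$ with crossing endpoints $\{a,b,c,d\}$, for each noncrossing arc $(p,q)$ of $\tau$ exactly one of $p,q$ has the property that the clockwise path from it to the other endpoint avoids $\{a,b,c,d\}$. This selection is forced because all four crossing endpoints must lie on a single side of $(p,q)$: if any split across the two sides, some crossing arc of $\tau$ would cross $(p,q)$, giving $\tau$ a second crossing. I define $\Psi(\tau)$ to be the collection of these $n-2$ selected endpoints, so $|\Psi(\tau)| = n-2$.

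The direction $\Psi\circ\Phi=\mathrm{id}$ is immediate from the structural feature above: each NCC-arc starting at $i\in S$ has no unmatched, hence no crossing, endpoints on its clockwise inside, so $\Psi$ selects $i$. The main obstacle is verifying $\Phi\circ\Psi=\mathrm{id}$, which I would prove by induction on the NCC round $r$ applied to $\Psi(\tau)$: at round $r$, NCC matches $i\in\Psi(\tau)$ with $i+(2r-1)$ precisely when the $\tau$-arc out of $i$ has clockwise length $2r-1$, using the defining property of $\Psi(\tau)$ together with the inductive hypothesis that all shorter $\tau$-arcs have already been reproduced. The four crossing endpoints $\{a,b,c,d\}$ remain unmatched throughout because they are excluded from $\Psi(\tau)$ and every $j\in\Psi(\tau)$ meets its own $\tau$-partner before encountering any of $\{a,b,c,d\}$ in the clockwise direction; thus $\Phi(\Psi(\tau))$ recovers all of $\tau$'s noncrossing arcs and closes the four leftover points into the original crossing.
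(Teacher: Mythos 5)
Your proposal is correct and follows essentially the same route as the paper: the forward map runs $\NCC(n,S)$ and closes the four leftover points into the unique crossing pairing, and your inverse---selecting from each noncrossing arc the endpoint whose clockwise path to its partner avoids the four crossing endpoints---is exactly the paper's ``earlier endpoint on its arc'' rule. You spell out the verification that the two maps are mutually inverse in more detail than the paper does, but the underlying bijection is identical.
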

	
	\begin{proof}
		By applying $\NCC(n,S)$ with $|S|=n-2$, we know that each choice of a $(n-2)$-element subset in $\{1,2,\dots,2n\}$ gives us a unique one crossing matching. 
		
		Now, if we are given a one-crossing matching $\tau\in P_{n,1}$. Assume the crossing is the intersection of the edges $\{b_1,b_3\}$ and $\{b_2,b_4\}$, then $\tau$ can be written in the form of
		
		\begin{equation}
		\tau=(a_1,a_{n+1}) (a_2,a_{n+2}) \dots (a_{n-2}, a_{2n-2}) (b_1, b_3) (b_2,b_4),
		\end{equation}
		
		where each pair $(a_i, a_{i+n})$ represents a non-crossing edge. Note that the two crossing edges $(b_1, b_3)$ and $(b_2,b_4)$ divide the circle in four arcs. Denote the four arcs by $l_{12},l_{23},l_{34},l_{41}$. 
		
		\begin{center}
			\begin{tikzpicture}
			
			\def \n {14}
			\def \radius {2cm}
			
			\node at ({-360/\n * (1-4)}:1.2*\radius) {} ;
			\draw[-, >=latex] ({360/\n * (1 - 1)}:\radius) 
			arc ({360/\n * (1 -1)}:{360/\n * (1)}:\radius);
			
			\node at ({-360/\n * (2-4)}:1.2*\radius) {} ;
			\draw[-, >=latex] ({360/\n * (2 - 1)}:\radius) 
			arc ({360/\n * (2 -1)}:{360/\n * (2)}:\radius);
			
			\node at ({-360/\n * (3-4)}:1.2*\radius) {$l_{41}$} ;
			\draw[-, >=latex] ({360/\n * (3 - 1)}:\radius) 
			arc ({360/\n * (3 -1)}:{360/\n * (3)}:\radius);
			
			\node at ({-360/\n * (4-4)}:1.2*\radius) {} ;
			\draw[-, >=latex] ({360/\n * (4 - 1)}:\radius) 
			arc ({360/\n * (4 -1)}:{360/\n * (4)}:\radius);
			
			\node at ({-360/\n * (5-4)}:1.2*\radius) {} ;
			\draw[-, >=latex] ({360/\n * (5 - 1)}:\radius) 
			arc ({360/\n * (5 -1)}:{360/\n * (5)}:\radius);
			
			\node at ({-360/\n * (6-4)}:1.2*\radius) {$b_1$} ;
			\draw[-, >=latex] ({360/\n * (6 - 1)}:\radius) 
			arc ({360/\n * (6 -1)}:{360/\n * (6)}:\radius);
			
			\node at ({-360/\n * (7-4)}:1.2*\radius) {$l_{12}$} ;
			\draw[-, >=latex] ({360/\n * (7 - 1)}:\radius) 
			arc ({360/\n * (7 -1)}:{360/\n * (7)}:\radius);
			
			\node at ({-360/\n * (8-4)}:1.2*\radius) {$b_2$} ;
			\draw[-, >=latex] ({360/\n * (8 - 1)}:\radius) 
			arc ({360/\n * (8 -1)}:{360/\n * (8)}:\radius);
			
			\node at ({-360/\n * (9-4)}:1.2*\radius) {$l_{23}$} ;
			\draw[-, >=latex] ({360/\n * (9 - 1)}:\radius) 
			arc ({360/\n * (9 -1)}:{360/\n * (9)}:\radius);
			
			\node at ({-360/\n * (10-4)}:1.2*\radius) {} ;
			\draw[-, >=latex] ({360/\n * (10 - 1)}:\radius) 
			arc ({360/\n * (10 -1)}:{360/\n * (10)}:\radius);
			
			\node at ({-360/\n * (11-4)}:1.2*\radius) {$b_3$} ;
			\draw[-, >=latex] ({360/\n * (11 - 1)}:\radius) 
			arc ({360/\n * (11 -1)}:{360/\n * (11)}:\radius);
			
			\node at ({-360/\n * (12-4)}:1.2*\radius) {$l_{34}$} ;
			\draw[-, >=latex] ({360/\n * (12 - 1)}:\radius) 
			arc ({360/\n * (12 -1)}:{360/\n * (12)}:\radius);
			
			\node at ({-360/\n * (13-4)}:1.2*\radius) {} ;
			\draw[-, >=latex] ({360/\n * (13 - 1)}:\radius) 
			arc ({360/\n * (13 -1)}:{360/\n * (13)}:\radius);
			
			\node at ({-360/\n * (14-4)}:1.2*\radius) {$b_4$} ;
			\draw[-, >=latex] ({360/\n * (14 - 1)}:\radius) 
			arc ({360/\n * (14 -1)}:{360/\n * (14)}:\radius);
			
			\foreach \from/\to in {8/13}
			\draw ({360/\n * (\from - 1)}:\radius) to [bend left=30] ({360/\n * (\to - 1)}:\radius);
			
			\foreach \from/\to in {5/11}
			\draw ({360/\n * (\from - 1)}:\radius) -- ({360/\n * (\to - 1)}:\radius);
			
			\end{tikzpicture}
		\end{center}
		
		For each noncrossing edge $(a_i,a_{n+1})$, both of the vertices $a_i$ and $a_{n+i}$ must be on the same arc, say $l_{41}$. The clockwise order from $b_4$ to $b_1$ is $b_4,b_4+1,\dots, b_1-1,b_1 \pmod{2n}$. WLOG, let $a_i$ denote the point appears earlier in this order. Then, we pick $a_i$ to represent this edge. Pick such point from each noncrossing edge, then the one-crossing matching corresponds to the $(n-2)$ point subset
		
		\begin{equation}
		S_{\tau}=\{a_1, a_2, \dots, a_{n-2}\}. 
		\end{equation}
		
		For the one-crossing matching in \thref{noncrossingexample}, take the edge $(2,5)$ as an example. Both $2$ and $5$ are on the arc from $14$ to $7$. The clockwise order of points is $14,1,2,3,4,5,6,7$, in which $2$ appears earlier than $5$. So we pick $2$ to represent this edge in $S$.
		
		Thus, there is a bijective relation between the $(n-2)$-element subsets of $\{1, 2,\dots, 2n\}$ and the one-crossing matchings $\tau \in P_{n,1}$.
	\end{proof}
	
	For any $\tau\in P_{n}$, let 
	
	$$d(\tau)=: \text{ the smallest positive integer } j \text{ such that } \sigma_{2n}^j(\tau)=\tau,$$
	
	then $d(\tau)$ must be a divisor of $2n$. The following lemma asserts that there are only three possible values of $d(\tau)$ for $\tau\in P_{n,1}$.
	
	\begin{lemma}\thlabel{dtau}
		For any $\tau\in P_{n,1}$, $d(\tau)\in\{2n,n,\frac{n}{2}\}$. More precisely,
		\begin{equation}
		\left\{\begin{array}{ll}
		d(\tau)=2n \text{ for all } \tau\in P_{n,1} & \text{if $n$ is odd} \\
		d(\tau)\in\{2n,n\} \text{ for all } \tau\in P_{n,1} & \text{if $n\equiv 0(\text{mod } 4)$} \\
		d(\tau)\in\{2n,n,\frac{n}{2}\} \text{ for all } \tau\in P_{n,1} & \text{if $n\equiv 2(\text{mod } 4)$} \\
		\end{array}\right. .
		\end{equation}
	\end{lemma}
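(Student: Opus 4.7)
My plan is to lift the bijection of \thref{bijection} to a $C_{2n}$-equivariant map, and then reduce the period question to a short divisibility calculation on $\Z/2n\Z$. Give the collection of $(n-2)$-element subsets of $\{1,\dots,2n\}$ the rotation action $S \mapsto S+1 := \{s+1 \pmod{2n} : s \in S\}$. Because every step of $\NCC(n,S)$ references only offsets (match $i$ with $i+1$, then unmatched $i$ with $i+3$, and so on), and because the final pairing of the four leftover vertices into the unique crossing is itself rotation-invariant, the bijection $\tau \mapsto S_\tau$ is $C_{2n}$-equivariant, i.e.\ $S_{\sigma_{2n}(\tau)} = S_\tau + 1 \pmod{2n}$. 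In particular, $\sigma_{2n}^{j}(\tau)=\tau$ if and only if $S_\tau + j \equiv S_\tau \pmod{2n}$.

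Now fix $\tau \in P_{n,1}$ and let $d=d(\tau)$; standard group theory gives $d \mid 2n$. Invariance of $S_\tau$ under translation by $d$ says that $S_\tau$ is a union of orbits of the cyclic subgroup $\langle d \rangle \subseteq \Z/2n\Z$, each of size $2n/d$. Hence $\tfrac{2n}{d}$ divides $|S_\tau| = n-2$, and so
\[
\frac{2n}{d} \;\Big|\; \gcd(2n,\,n-2) \;=\; \gcd(4,\,n-2),
\]
the last equality being the one-step Euclidean reduction $\gcd(2n,n-2)=\gcd\!\bigl(2n-2(n-2),\,n-2\bigr)$. The three-way split follows at once: if $n$ is odd, then $\gcd(4,n-2)=1$, forcing $2n/d=1$ and $d=2n$; if $n\equiv 0\pmod 4$, then $\gcd(4,n-2)=2$ and so $2n/d\in\{1,2\}$, giving $d\in\{n,2n\}$; if $n\equiv 2\pmod 4$, then $\gcd(4,n-2)=4$ and so $2n/d\in\{1,2,4\}$, giving $d\in\{n/2,n,2n\}$.

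The only non-routine step is the equivariance claim in the first paragraph. The potential worry is that $\NCC$ proceeds in rounds and could in principle be sensitive to the labeling, so that rotating $S$ might create or destroy a conflict (an already-matched target, or a target now lying in $S$) that alters the outcome. However, at every stage the relevant conditions---membership in $S$, and whether a target has been matched in an earlier round---are themselves translation-equivariant, so a straightforward induction on the round number yields $\NCC(n,S+1)=\sigma_{2n}(\NCC(n,S))$. Once that equivariance is in hand, the lemma reduces to the elementary divisibility argument above.
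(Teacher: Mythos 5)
Your proposal is correct and follows essentially the same route as the paper: both pass through the (rotation-equivariant) bijection $\tau\mapsto S_\tau$ of \thref{bijection} and reduce the claim to a divisibility constraint equivalent to $2n\mid (n-2)\,d(\tau)$, i.e.\ $\frac{2n}{d(\tau)}\mid\gcd(4,n-2)$, followed by the same three-way case analysis. The only cosmetic difference is how the constraint is extracted: you use the decomposition of $S_\tau$ into translation orbits of size $2n/d$, while the paper compares $\sum_i a_i$ with $\sum_i (a_i+d)$ modulo $2n$.
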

	
	\begin{proof}
		
		First, note that $d(\tau)$ must be a divisor of $2n$. Now, assume $\tau\in P_{n,1}$, we use the bijection described earlier. Let $S_{\tau}=\{a_1, a_2, \dots, a_{n-2}\}$. Note that
		
		\begin{equation}
		S_{\sigma_{2n}(\tau)}=\{a_1+1,a_2+1,\dots, a_{n-2}+1\}(\text{mod } 2n).
		\end{equation}
		
		Since $\sigma_{2n}^{d(\tau)}(\tau)=\tau$. Hence, $S_{\tau}$ and $S_{\sigma_{2n}(\tau)}$ represent the same residue classes modulo $2n$. Thus $\sum_{i=1}^{n-2} a_i\equiv \sum_{i=1}^{n-2}(a_i+d(\tau))(\text{mod } 2n)$. Cancel the term $\sum_{i=1}^{n-2} a_i$ on both sides, and we get $2n|(n-2)d(\tau)$.
		
		\begin{itemize}
			\item If $n$ is odd, then $n-2$ is also odd. Thus, $\gcd(2n,n-2)=1\Rightarrow 2n|d(\tau)\Rightarrow d(\tau)=2n$.
			\item If $n$ is even, then $n|\frac{n-2}{2}\cdot d(\tau)$.
			\begin{itemize}
				\item If $4|n$, then $\frac{n-2}{2}$ is odd, and $\gcd(n,\frac{n-2}{2})=1$. Thus, $n|d(\tau)\Rightarrow d(\tau)\in \{2n,n\}$.
				\item If $n\equiv 2(\text{mod } 4)$, then $\frac{n}{2}|\frac{n-2}{4}\cdot d(\tau)$ and $\gcd(\frac{n}{2},\frac{n-2}{4})=1$. Thus, $\frac{n}{2}|d(\tau)\Rightarrow d(\tau)\in\{2n,n,\frac{n}{2}\}$.
			\end{itemize}
		\end{itemize}
		
	\end{proof}
	
	Now, let
	
	\begin{equation}
	A_j=\{\tau\in P_{n,1}|\sigma_{2n}^j(\tau)=\tau\}
	\end{equation}
	
	be the subset of one-crossing matchings fixed by $\sigma_{2n}^j$. Since $\sigma_{2n}^{2n}=1$, we have $|A_{2n}|=|P_{n,1}|=\binom{2n}{n-2}$. Note that for $\tau\in P_{n,1}$ and each fixed $1\leqslant j\leqslant 2n$, $\sigma_{2n}^j(\tau)=\tau$ if and only if $d(\tau)|j$. So, $A_j$ can also be written as
	
	\begin{equation}
	A_j=\{\tau\in P_{n,1}|\sigma_{2n}^j(\tau)=\tau\}=\{\tau\in P_{n,1}| d(\tau)\text{ is a divisor of }j\}.
	\end{equation}
	
	To check whether $f_{n,1}(p)$ satisfies the conditions of cyclic sieving, it is sufficient to check whether $f_{n,1}(\xi_{2n}^j)$ agrees with $|A_j|$ at each $j$. The values of $|A_j|$ are evaluated in the following lemma.
	
	\begin{lemma}\thlabel{size} 
		$|A_{2n}|=\binom{2n}{n-2}$, and $|A_j|=0$ for $j\notin \{\frac{n}{2}, n, 2n\}$. The values of $A_n$ and $A_{\frac{n}{2}}$ depend on the value of $n(\text{mod } 4)$:
		\begin{equation}
		|A_n|=
		\left\{\begin{array}{ll}
		0 & \text{ if $n$ is odd}\\
		\binom{n}{\frac{n-2}{2}} & \text{ if $n$ is even}\
		\end{array}\right. , 
		\end{equation}
		
		\begin{equation}
		|A_{\frac{n}{2}}|=
		\left\{\begin{array}{ll}
		0 & \text{ if $n$ is odd or $4|n$}\\
		\binom{\frac{n}{2}}{\frac{n-2}{4}} & \text{ if $n\equiv 2 \pmod 4$ }\\
		\end{array}\right. .
		\end{equation}
	\end{lemma}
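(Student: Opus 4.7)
The plan is to transport the problem from matchings to subsets via the bijection $\tau \leftrightarrow S_\tau$ from the proof of \thref{bijection}, and then count translation-invariant subsets. The key structural fact is that this bijection is $C_{2n}$-equivariant, i.e., $S_{\sigma_{2n}(\tau)} = S_\tau + 1 \pmod{2n}$, since the $\NCC$ matching rules and the rule for picking the earlier endpoint of each noncrossing edge depend only on circular position and therefore commute with rotation. Consequently, $\tau \in A_j$ if and only if $S_\tau$ is fixed under translation by $j$ modulo $2n$, so counting $|A_j|$ reduces to counting $(n-2)$-subsets of $\mathbb{Z}/2n\mathbb{Z}$ invariant under a cyclic shift.

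With this reduction in hand, the easy cases follow quickly. Since $\sigma_{2n}^{2n} = \mathrm{id}$, $A_{2n} = P_{n,1}$, so $|A_{2n}| = \binom{2n}{n-2}$. For any other divisor $j$ of $2n$, a nonempty $A_j$ requires some $\tau$ with $d(\tau) \mid j$ and $d(\tau) \in \{2n, n, n/2\}$ by \thref{dtau}; a short divisor-of-$2n$ check leaves only $j \in \{n/2, n, 2n\}$ (in particular $3n/2$ is ruled out since $3n/2 \mid 2n$ forces $3 \mid 4$).

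For $|A_n|$, I count $(n-2)$-element subsets $S \subseteq \{1,\ldots,2n\}$ invariant under translation by $n$. Any such $S$ is a disjoint union of orbits $\{i, i+n\}$ for $1 \le i \le n$, each of size two, so $S$ exists only when $n-2$ is even (i.e.\ $n$ even), and in that case the count is $\binom{n}{(n-2)/2}$. For $|A_{n/2}|$, the orbits under translation by $n/2$ have size four when $n$ is even, so a $(n-2)$-element union of such orbits exists only when $4 \mid n-2$, that is, $n \equiv 2 \pmod{4}$, giving $\binom{n/2}{(n-2)/4}$. The remaining cases ($n$ odd, or $4 \mid n$ with $j = n/2$) vanish either directly by this parity analysis or by the divisibility restrictions in \thref{dtau}.

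The main obstacle is the equivariance step $S_{\sigma_{2n}(\tau)} = S_\tau + 1$: one has to confirm that the labeling $b_1,b_2,b_3,b_4$ of the crossing endpoints and the arcs $l_{12}, l_{23}, l_{34}, l_{41}$ all rotate coherently under $\sigma_{2n}$, so that the smaller-endpoint selection on each noncrossing edge shifts uniformly by one. Once this is verified, everything else reduces to elementary orbit counting, and the binomial identities asserted in the statement fall out immediately.
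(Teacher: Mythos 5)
Your strategy is the same as the paper's: transport the count through the equivariant bijection $\tau\mapsto S_\tau$ (so that $\tau\in A_j$ iff $S_\tau$ is invariant under translation by $j$ in $\mathbb{Z}/2n\mathbb{Z}$), and then count unions of translation orbits, which are the pairs $\{i,i+n\}$ for $j=n$ and the quadruples $\{i,i+\frac{n}{2},i+n,i+\frac{3n}{2}\}$ for $j=\frac{n}{2}$. Your evaluations of $|A_{2n}|$, $|A_n|$, and $|A_{n/2}|$, including the parity conditions for nonemptiness, agree with the paper's proof.

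The one step that does not work as written is the vanishing claim $|A_j|=0$ for $j\notin\{\frac{n}{2},n,2n\}$. The relevant condition for $\tau\in A_j$ is $d(\tau)\mid j$, not $j\mid 2n$, so you cannot restrict attention to divisors of $2n$: with $d(\tau)\in\{2n,n,\frac{n}{2}\}$ and $1\leqslant j\leqslant 2n$, the multiples of $\frac{n}{2}$ in range are $\frac{n}{2},n,\frac{3n}{2},2n$, and $j=\frac{3n}{2}$ survives. In fact $A_{3n/2}=\{\tau: d(\tau)=\frac{n}{2}\}=A_{n/2}$, which is nonempty when $n\equiv 2\pmod 4$ (e.g.\ $n=6$, $j=9$ gives $|A_9|=3\neq 0$), so your parenthetical ``$\frac{3n}{2}$ is ruled out since $\frac{3n}{2}\mid 2n$ forces $3\mid 4$'' rules out the wrong condition. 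To be fair, the paper's own proof asserts this vanishing ``immediately from'' \thref{dtau} and overlooks the same case, so the lemma as stated needs $\frac{3n}{2}$ added to the exceptional set when $n\equiv 2\pmod 4$; the CSP conclusion is unaffected because $\xi_{2n}^{3n/2}$ is the complex conjugate of $\xi_{2n}^{n/2}$ and $f_{n,1}$ has integer coefficients, so $f_{n,1}(\xi_{2n}^{3n/2})=f_{n,1}(\xi_{2n}^{n/2})=|A_{n/2}|=|A_{3n/2}|$. If you correct that one step (or the statement), the rest of your argument goes through exactly as in the paper.
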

	
	\begin{proof}
		
		From the definition, we have $|A_{2n}|=|P_{n,1}|=\binom{2n}{n-2}$. Also, $|A_j|=0$ for $j\notin \{\frac{n}{2}, n, 2n\}$ follows immediately from \thref{dtau}. Now, we calculate $|A_n|$ and $|A_{\frac{n}{2}}|$. Again, according to \thref{dtau}, we have $|A_n|=0$ when $n$ is odd and $|A_{\frac{n}{2}}|=0$ when $n$ is odd or $4|n$. 
		
		Now, when $n$ is even, we divide the numbers $\{1,2,\dots, 2n\}$ into $n$ pairs as the following:
		
		\begin{align*}
		& \beta_1=\{1, 1+n\},\\
		& \beta_2=\{2, 2+n\},\\
		& \dots \\
		& \beta_n=\{n, 2n\}.
		\end{align*}
		
		For $\tau\in P_{n,1}$, we use the bijective relation again. Let $S_{\tau}=\{a_1, a_2, \dots, a_{n-2}\}$, then $S_{\sigma_{2n}^n(\tau)}=\{a_1+n, a_2+n, \dots, a_{n-2}+n\}$. From the bijection, we know $\tau\in A_n$ if and only if $S_{\tau}=S_{\sigma_{2n}^n(\tau)} \pmod{2n}$, which means for each $1\leqslant i\leqslant n$ we have $i\in S_{\tau}\Leftrightarrow i+n\in S_{\tau}$. Hence,
		
		\begin{equation}
		|A_n|=\# \text{ ways to pick $\frac{n-2}{2}$ pairs among $\beta_j$'s}=\binom{n}{\frac{n-2}{2}}.
		\end{equation}
		
		Now, when $n\equiv 2 \pmod 4$, we divide the numbers $\{1,2,\dots, 2n\}$ into $\frac{n}{2}$ parts as the following:
		
		\begin{align*}
		& \gamma_1=\{1, 1+\frac{n}{2}, 1+n, 1+\frac{3n}{2}\},\\
		& \gamma_2=\{2, 2+\frac{n}{2}, 2+n, 2+\frac{3n}{2}\},\\
		& \dots \\
		& \gamma_{\frac{n}{2}}=\{\frac{n}{2}, n, \frac{3n}{2}, 2n\}.
		\end{align*}
		
		We now have $S_{\sigma_{2n}^{\frac{n}{2}}(\tau)}=\{a_1+\frac{n}{2}, a_2+\frac{n}{2}, \dots, a_{n-2}+\frac{n}{2}\}$. From the bijective relation, we know $\tau\in A_{\frac{n}{2}}$ if and only if $S_{\tau}=S_{\sigma_{2n}^{\frac{n}{2}}(\tau)} \pmod{2n}$, which means the following four statements are equivalent for each $1\leqslant i\leqslant n$: 1) $i\in S_{\tau}$, 2) $i+\frac{n}{2} \in S_{\tau}$, 3)$i+n\in S_{\tau}$, and 4) $i+\frac{3n}{2} \in S_{\tau}$. Hence, 
		
		\begin{equation}
		|A_{\frac{n}{2}}|=\# \text{ ways to pick $\frac{n-2}{4}$ parts among $\gamma_j$'s}=\binom{\frac{n}{2}}{\frac{n-2}{4}}.
		\end{equation}
	\end{proof}
	
	With these results, we can now conclude the main result for the one crossing matchings. 
	
	\begin{thm}[CSP for One Crossing Matchings]\thlabel{main1}
		Let $\xi_{2n}=e^{\frac{\pi i}{n}}$, which is a primitive root of the equation $x^{2n}-1=0$. Let $f_{n,1}=\binomial{2n}{n-2}_q$. Then $f_{n,1}(\xi_{2n}^j)=|\{\tau\in P_{n,1}|\sigma_{2n}^j(\tau)=\tau\}|$. In other words, $(P_{n,1},f_{n,1}(q),C_{2n})$ exhibits the cyclic sieving phenomenon.
	\end{thm}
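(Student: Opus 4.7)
My plan is to evaluate $f_{n,1}(\xi_{2n}^j) = \binomial{2n}{n-2}_{\xi_{2n}^j}$ directly for every $j \in \{1,\dots,2n\}$ and compare with the values of $|A_j|$ supplied by \thref{size}. The key tool will be the $q$-Lucas theorem for Gaussian binomial coefficients: if $\zeta$ is a primitive $d$-th root of unity and $a = a_1 d + a_0$, $b = b_1 d + b_0$ with $0 \leqslant a_0, b_0 < d$, then
\[
\binomial{a}{b}_{\zeta} = \binom{a_1}{b_1}\, \binomial{a_0}{b_0}_{\zeta}.
\]

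I would first fix $j$ and set $d = 2n/\gcd(2n,j)$, so that $\xi_{2n}^j$ is a primitive $d$-th root of unity. Applying the theorem to $(a,b) = (2n, n-2)$ and using $d \mid 2n$ (so $2n \bmod d = 0$, and $\binomial{0}{b_0}_{\zeta}$ equals $1$ when $b_0 = 0$ and vanishes otherwise), I obtain
\[
f_{n,1}(\xi_{2n}^j) = \begin{cases} \binom{2n/d}{(n-2)/d} & \text{if } d \mid n-2, \\ 0 & \text{otherwise.} \end{cases}
\]

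Next I would observe that $d \mid 2n$ together with $d \mid n-2$ forces $d \mid \gcd(2n, n-2) = \gcd(4, n-2)$, so only $d \in \{1, 2, 4\}$ can contribute, and these are further constrained by $n \bmod 4$, matching exactly the admissible values $\{2n, n, n/2\}$ of $d(\tau)$ in \thref{dtau}. The three contributing cases are then: $d = 1$ (so $j = 2n$), which gives $\binom{2n}{n-2} = |A_{2n}|$; $d = 2$ (so $j = n$), which requires $n$ even and gives $\binom{n}{(n-2)/2} = |A_n|$; and $d = 4$ (so $j \in \{n/2,\, 3n/2\}$), which requires $n \equiv 2 \pmod 4$ and gives $\binom{n/2}{(n-2)/4} = |A_{n/2}|$. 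For every remaining $j$ both sides vanish, completing the verification.

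The main obstacle is simply a clean invocation of the $q$-Lucas theorem, which I would either cite (it is standard in the CSP literature) or sketch in a line or two. Once it is in place, the case analysis above lines up the $q$-binomial evaluations with the cardinalities from \thref{size} with nothing beyond routine book-keeping on $n \bmod 4$.
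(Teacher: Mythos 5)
Your proposal is correct, and it takes a genuinely different route on the polynomial-evaluation side of the theorem. The paper evaluates $f_{n,1}$ at roots of unity by hand: it expands the $q$-binomial as a ratio of $q$-integers, discards the odd-index factors using $[m]_{q=-1}=1$ and pairs the even-index ones using $[m_1]_q/[m_2]_q\big|_{q=-1}=m_1/m_2$ (its \thref{fact1} and \thref{fact2}), but it only carries this out explicitly for $j=2n$ and $j=n$ and dispatches every other $j$ with ``the same idea.'' Your single invocation of $q$-Lucas replaces all of that: setting $d=2n/\gcd(2n,j)$ and using $d\mid 2n$ gives $f_{n,1}(\xi_{2n}^j)=\binom{2n/d}{(n-2)/d}$ when $d\mid n-2$ and $0$ otherwise, and the resulting constraint $d\mid\gcd(2n,n-2)=\gcd(4,n-2)$ reproduces the case split on $n\bmod 4$ from \thref{dtau} automatically, while also covering exponents such as $j=3n/2$ (where $\gcd(j,2n)=n/2$) that the paper's explicit computation never reaches. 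Both arguments lean identically on \thref{size} for the enumerative side, so the difference is confined to the evaluation: the paper's method is elementary and self-contained but incomplete as written, whereas yours is uniform in $j$ and actually closes that gap, at the modest cost of citing or sketching the $q$-Lucas theorem.
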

	
	\begin{proof}
		
		Now, we can use the results from previous lemmas to prove $f_{n,1}(\zeta_{2n}^j)=|A_j|$. First, note that $f_{n,1}(1)=\binom{2n}{n-2}=|A_{2n}|$ is clear. Then, we prove 
		
		\begin{equation}
		f_{n}(-1)=
		\left\{\begin{array}{ll}
		0=|A_{n}| & \text{ if $n$ is odd}\\
		\binom{n}{\frac{n-2}{2}}=|A_{n}| & \text{ if $n$ is even}\\
		\end{array}\right. ,
		\end{equation}
		
		using the following facts.
		
		\begin{fact}\thlabel{fact1}
			When $m$ is odd,
			\begin{equation}
			[m]_q(-1)=1+(-1)^1+(-1)^2+\dots+(-1)^{m-1}=1.
			\end{equation}
		\end{fact}
		
		\begin{fact}\thlabel{fact2}
			When $m$ is even,
			\begin{equation}
			[m]_q=(1+q)(1+q^2+q^4+\dots+q^{m-2}).
			\end{equation}
			Consequently, 
			\begin{equation}
			[m]_q(-1)=0,
			\end{equation}
			and for any two positive even integers $m_1$ and $m_2$, we have
			\begin{equation}
			\frac{[m_1]_q}{[m_2]_q}(-1)=\frac{1+(-1)^2+(-1)^4+\dots+(-1)^{m_1-2}}{1+(-1)^2+(-1)^4+\dots+(-1)^{m_2-2}}=\frac{m_1/2}{m_2/2}.
			\end{equation}
		\end{fact}
		
		Now, when $n$ is even, we have 
		
		\begin{align*}
		f_{n,1}(-1)=\binomial{2n}{n-2}_{q=-1} & = \frac{[2n]_{q=-1}[2n-1]_{q=-1}\cdots [n+3]_{q=-1}}{[n-2]_{q=-1}[n-3]_{q=-1}\cdots [1]_{q=-1}}\\
		& = \frac{[2n]_{q=-1}[2n-2]_{q=-1}\cdots [n+4]_{q=-1}}{[n-2]_{q=-1}[n-4]_{q=-1}\cdots [2]_{q=-1}}, \text{ (We have used \thref{fact1} here.)}\\
		& = \frac{[2n]_{q=-1}}{[n-2]_{q=-1}}\frac{[2n-2]_{q=-1}}{[n-4]_{q=-1}}\cdots \frac{[n+4]_{q=-1}}{ [2]_{q=-1}}\\
		& = \frac{n(n-1)\cdots \frac{n+4}{2}}{\frac{n-2}{2}\frac{n-4}{2}\cdots 1}, \text{ (We have used \thref{fact2} here.)}\\
		& = \binom{n}{\frac{n-2}{2}}=|A_n|.
		\end{align*}
		
		But when $n$ is odd, we have 
		
		\begin{align*}
		f_{n,1}(-1)=\binom{2n}{n-2}_{q=-1} & = \frac{[2n]_{q=-1}[2n-1]_{q=-1}\cdots [n+3]_{q=-1}}{[n-2]_{q=-1}[n-3]_{q=-1}\cdots [1]_{q=-1}}\\
		& = \frac{[2n]_{q=-1}[2n-2]_{q=-1}\cdots [n+3]_{q=-1}}{[n-3]_{q=-1}\cdots [2]_{q=-1}}, \text{ (We have used \thref{fact1} here.)}\\
		& = [2n]_{q=-1}\frac{[2n-2]_{q=-1}}{[n-3]_{q=-1}}\frac{[2n-4]_{q=-1}}{[n-5]_{q=-1}}\cdots \frac{[n+3]_{q=-1}}{ [2]_{q=-1}}\\
		& = 0\cdot \frac{(n-1)(n-2)\cdots \frac{n+3}{2}}{\frac{n-3}{2}\frac{n-5}{2}\cdots 1}, \text{ (We have used \thref{fact2} here.)}\\
		& = 0=|A_n|.
		\end{align*}
		
		Using the same idea, we can prove $f_{n,1}(\zeta_{2n}^j)=|A_j|$. Thus, the proof is complete and $(P_{n,1},f_{n,1}(q),C_{2n})$ exhibits the cyclic sieving phenomenon.
	\end{proof}
	
	\subsection{Two-crossing Matchings}
	
	\begin{lemma}\thlabel{bijection2}
		$|P_{n,2}|=\frac{n+3}{2}\binom{2n}{n-3}$.
	\end{lemma}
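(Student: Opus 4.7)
The plan is to prove $|P_{n,2}| = \frac{n+3}{2}\binom{2n}{n-3}$ by classifying 2-crossing matchings into two types according to the structure of their two crossings. A \emph{Type B} matching is one where the two crossings share a common ``central'' edge: three edges are involved in crossings (a central edge crossed by two ``side'' edges which themselves do not cross), and the remaining $n-3$ edges are noncrossing. A \emph{Type A} matching is one where the two crossings are disjoint: four edges form two separate crossing pairs, and $n-4$ edges are noncrossing.

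For the Type B count, I would generalize the bijective argument of \thref{bijection}. Applying $\NCC(n, S)$ with $|S|=n-3$ yields a noncrossing partial matching with $n-3$ pairs and $6$ unmatched points. On $6$ cyclically arranged points there are exactly three Type B configurations (parameterized by which of the three ``long diagonals'' is the central edge). A key property of $\NCC$ is that no unmatched point lies in the short arc of any NCC edge, so chords connecting unmatched points automatically have both endpoints on the same side of every NCC edge and thus cannot cross any NCC edge; each of the three Type B completions therefore contributes exactly two crossings overall and no extras. The inverse map picks one representative per noncrossing edge (the earlier endpoint in cyclic order within the region bounded by the three crossing edges), analogous to the construction of $S_\tau$ in \thref{bijection}. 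This yields a bijection between $(n-3)$-subsets of $\{1,\dots,2n\}$ paired with a 3-choice and Type B matchings, giving $3\binom{2n}{n-3}$ such matchings.

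For Type A, the $\NCC$ framework does not give a direct bijection. The subtlety is that the noncrossing edges in a Type A matching may be ``long''---for example, an edge spanning the middle region between the two crossing blocks, such as the edge $(5,10)$ in the matching $(1,3)(2,4)(6,8)(7,9)(5,10)$ for $n=5$---and such edges are never produced by $\NCC(n,S)$ with $|S|=n-4$, which only produces short, adjacency-style edges. Instead, the Type A count is obtained by analyzing the region structure: the four crossing edges partition the disc into $7$ regions (three small regions per crossing block plus one middle region bridging the two blocks), and the $n-4$ noncrossing edges must be matched within these regions. Parameterizing Type A matchings by the positions of the two crossing blocks on the cycle and the noncrossing completions in each region, a careful enumeration gives $\frac{n-3}{2}\binom{2n}{n-3}$ Type A matchings.

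Summing the two counts,
\[
|P_{n,2}| = 3\binom{2n}{n-3} + \frac{n-3}{2}\binom{2n}{n-3} = \frac{n+3}{2}\binom{2n}{n-3},
\]
as desired. The main obstacle is the Type A count: the possibility of long noncrossing edges spanning the middle region prevents a direct $\NCC$ bijection, and a careful region-based enumeration (or an appropriate double-counting identity pairing each Type A matching with two marked subsets) is required to obtain the coefficient $\frac{n-3}{2}$.
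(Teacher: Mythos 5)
Your decomposition into Type B (crossings sharing a central edge) and Type A (two disjoint crossings) is sound, and your Type B count of $3\binom{2n}{n-3}$ agrees with the paper's count of its type $T_3$. The arithmetic at the end is also consistent: the paper's figures imply the disjoint-crossing matchings do number $\frac{n-3}{2}\binom{2n}{n-3}$. But that Type A count is exactly the hard part of the lemma, and your proposal does not prove it --- it is asserted via ``a careful enumeration gives,'' and you concede in your closing sentence that the required argument is still missing. As written, the proof has a genuine gap at its central step: you have correctly diagnosed the obstruction (noncrossing edges that span the region between the two crossing blocks, such as $(5,10)$ in your $n=5$ example, are never produced by $\NCC$), but you have not supplied the mechanism that overcomes it.

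The paper's resolution is to refine your Type A by the number of separating edges: for $4\leqslant k\leqslant n$, let $T_k$ consist of the matchings with exactly $k-4$ edges separating the two crossings. The point is that one should \emph{not} try to generate the long separating edges with $\NCC$; instead one runs $\NCC(n,S)$ with $|S|=n-k$, leaving $2k$ points unmatched, and draws all $k$ remaining edges (the $4$ crossing edges together with the $k-4$ separating chords) on those unmatched points, which can be done in exactly $k$ ways. This gives $|T_k|=k\binom{2n}{n-k}$, and the total $\sum_{k=3}^{n}k\binom{2n}{n-k}$ is then collapsed to $\frac{n+3}{2}\binom{2n}{n-3}$ by a binomial-identity computation using $\sum_{j}\binom{s}{j}=2^{s}$ and $j\binom{s}{j}=s\binom{s-1}{j-1}$. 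To complete your proof you would either need to adopt this finer stratification (in which case your Type A count becomes $\sum_{k=4}^{n}k\binom{2n}{n-k}$ and still requires the identity manipulation), or supply the ``region-based enumeration'' you allude to in full detail; neither is a routine omission.
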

	
	\begin{proof}
		The $\NCC$ algorithm is again useful here, but more classifications are needed before we apply the algorithm. The reason is that different from the one-crossing matchings, the two crossings can be separated into two parts by some edges. There are $n-2$ types of two-crossings matchings in total, according to the number of edges separating the two crosses. 
		
		There are two types of two-crossing matchings such that no edge separates the two crossings. The first type, denoted by $T_3$, contains the two crossing matchings with the following partial matching. 
		
		\begin{center}
			\begin{tikzpicture}
			\def \n {6}
			\def \radius {1cm}
			
			\foreach \s in {\n,...,1}
			{
				\draw[-, >=latex] ({360/\n * (\s - 1)}:\radius) 
				arc ({360/\n * (\s -1)}:{360/\n * (\s)}:\radius);
			}
			
			\foreach \from/\to in {1/5, 2/4, 3/6}
			\draw ({360/\n * (\from - 1)}:\radius) -- ({360/\n * (\to - 1)}:\radius);
			\node[text width=6cm, anchor=west, right] at (-0.3,-1.5)
			{$T_3$};
			\end{tikzpicture}
		\end{center}
		
		After we choose an $(n-3)$-element subset $S\subseteq\{1,2,\cdots,2n\}$ and apply the algorithm, $\NCC(n,S)$ returns a noncrossing partial matchings with $6$ points left unmatched. This gives us three ways to construct a $T_3$-type of two crossing matching. Thus, there is a $3$ to $1$ corresponding relation between this type of two-crossing matchings and the $(n-3)$-element subsets $S\subseteq\{1,2,\cdots,2n\}$, which implies the number of elements in this type is $|T_3|=3\binom{2n}{n-3}$.
		
		Here is an explicit example with $n = 7$ and $S = \{1,2,5,9\}$. Then $\NCC(7,\{1,2,5,9\})$ gives us a noncrossing partial matching $(1,4)(2,3)(5,6)(9,10)$. 
		
		\begin{center}
			\begin{tikzpicture}
			
			\def \n {14}
			\def \radius {1.5cm}
			
			\foreach \s in {\n,...,1}
			{
				\node at ({-360/\n * (\s-4)}:1.2*\radius) {$\s$} ;
				\draw[-, >=latex] ({360/\n * (\s - 1)}:\radius) 
				arc ({360/\n * (\s -1)}:{360/\n * (\s)}:\radius);
			}
			
			\foreach \from/\to in {2/3, 1/4, 9/10, 13/14}
			\draw ({360/\n * (\from - 1)}:\radius) to [bend left=30] ({360/\n * (\to - 1)}:\radius);
			
			\end{tikzpicture}
		\end{center}
		
		There are exactly six points $7,8,11,12,13,14$ left, which give us three ways to get a two-crossing matching in the type of $T_3$, which are $(7,12)(8,14)(11,13)$, $(8,13)(7,11)(12,14)$, and $(11,14)(8,12)(7,13)$ (shown in the following graphs). Thus, we can conclude that $|T_3|=3\binom{2n}{n-3}$.
		
		\begin{center}
			\begin{tikzpicture}
			
			\def \n {14}
			\def \radius {1.5cm}
			
			\foreach \s in {\n,...,1}
			{
				\node at ({-360/\n * (\s-4)}:1.2*\radius) {$\s$} ;
				\draw[-, >=latex] ({360/\n * (\s - 1)}:\radius) 
				arc ({360/\n * (\s -1)}:{360/\n * (\s)}:\radius);
			}
			
			\foreach \from/\to in {2/3, 1/4, 9/10, 13/14, 7/12, 5/11, 6/8}
			\draw ({360/\n * (\from - 1)}:\radius) to [bend left=30] ({360/\n * (\to - 1)}:\radius);
			
			\end{tikzpicture}
		\end{center}
		
		\begin{center}
			\begin{tikzpicture}
			
			\def \n {14}
			\def \radius {1.5cm}
			
			\foreach \s in {\n,...,1}
			{
				\node at ({-360/\n * (\s-4)}:1.2*\radius) {$\s$} ;
				\draw[-, >=latex] ({360/\n * (\s - 1)}:\radius) 
				arc ({360/\n * (\s -1)}:{360/\n * (\s)}:\radius);
			}
			
			\foreach \from/\to in {2/3, 1/4, 9/10, 13/14, 8/12, 6/11, 5/7}
			\draw ({360/\n * (\from - 1)}:\radius) to [bend left=30] ({360/\n * (\to - 1)}:\radius);
			
			\end{tikzpicture}
		\end{center}
		
		\begin{center}
			\begin{tikzpicture}
			
			\def \n {14}
			\def \radius {1.5cm}
			
			\foreach \s in {\n,...,1}
			{
				\node at ({-360/\n * (\s-4)}:1.2*\radius) {$\s$} ;
				\draw[-, >=latex] ({360/\n * (\s - 1)}:\radius) 
				arc ({360/\n * (\s -1)}:{360/\n * (\s)}:\radius);
			}
			
			\foreach \from/\to in {2/3, 1/4, 9/10, 13/14, 5/8, 7/11, 6/12}
			\draw ({360/\n * (\from - 1)}:\radius) to [bend left=30] ({360/\n * (\to - 1)}:\radius);
			
			\end{tikzpicture}
		\end{center}
		
		Another type of two-crossing matchings with no edge separates the two different crossings, denoted by $T_4$, is the set of the two crossing matchings contain the following partial matching. Similarly, we have $|T_4|=4\binom{2n}{n-4}$.
		
		\begin{center}
			\begin{tikzpicture}
			\def \n {8}
			\def \radius {1cm}
			
			\foreach \s in {\n,...,1}
			{
				\draw[-, >=latex] ({360/\n * (\s - 1)}:\radius) 
				arc ({360/\n * (\s -1)}:{360/\n * (\s)}:\radius);
			}
			
			\foreach \from/\to in {1/7, 6/8, 2/4, 3/5}
			\draw ({360/\n * (\from - 1)}:\radius) -- ({360/\n * (\to - 1)}:\radius);
			\node[text width=6cm, anchor=west, right] at (-0.3,-1.5)
			{$T_4$};
			\end{tikzpicture}
		\end{center}
		
		Here is an example with $n = 7$ and $S = \{1,2,5\}$. After we chose the $7-4=3$ points, $\NCC(n,\{1,2,5\})$ gives us a noncrossing partial matching $(1,4)(2,3)(5,6)$. There are eight points $7,8,11,12,13,14$ left, which give us exactly four ways to get a two-crossing matching in the type of $T_4$, which are $(7,9)(8,10)(11,13)(12,14)$, $(8,10)(9,11)(12,14)(7,13)$, $(9,11)(10,12)(7,13)(8,14)$, and $(10,12)(11,13)(8,14)(7,9)$.
		
		\begin{center}
			\begin{tikzpicture}
			
			\def \n {14}
			\def \radius {1.5cm}
			
			\foreach \s in {\n,...,1}
			{
				\node at ({-360/\n * (\s-4)}:1.2*\radius) {$\s$} ;
				\draw[-, >=latex] ({360/\n * (\s - 1)}:\radius) 
				arc ({360/\n * (\s -1)}:{360/\n * (\s)}:\radius);
			}
			
			\foreach \from/\to in {2/3, 1/4, 13/14}
			\draw ({360/\n * (\from - 1)}:\radius) to [bend left=30] ({360/\n * (\to - 1)}:\radius);
			
			\end{tikzpicture}
		\end{center}
		
		There can be some edges separate the two different crossings. The matchings with exactly $k-4$ edges separating the two crossings form a type, denoted by $T_k$, where $5\leqslant k\leqslant n$. Moreover, there is a $k$ to $1$ corresponding relation between $T_k$ and the noncrossing partial matchings obtained by $\NCC(n,S)$ with $S\subseteq \{1,2,\dots,2n\}$ and $|S|=n-k$, which implies
		
		\begin{lemma}\thlabel{type}
			The number of elements in the type $T_k$ is 
			
			$$|T_k|=k\binom{2n}{n-k}, \text{ for } 3\leqslant k\leqslant n.$$
		\end{lemma}
		\begin{proof}[Proof of \thref{type}]
			We have proved it for $k=3,4$. Now, assume $k\geqslant 5$. The $T_k$ type has $k-4$ edges separate the two crossings (shown below). 
			
			\begin{center}
				\begin{tikzpicture}
				
				\def \n {14}
				\def \radius {2cm}
				
				\node at ({-360/\n * (1-4)}:1.2*\radius) {} ;
				\draw[-, >=latex] ({360/\n * (1 - 1)}:\radius) 
				arc ({360/\n * (1 -1)}:{360/\n * (1)}:\radius);
				
				\node at ({-360/\n * (2-4)}:1.2*\radius) {} ;
				\draw[-, >=latex] ({360/\n * (2 - 1)}:\radius) 
				arc ({360/\n * (2 -1)}:{360/\n * (2)}:\radius);
				
				\node at ({-360/\n * (3-4)}:1.2*\radius) {} ;
				\draw[-, >=latex] ({360/\n * (3 - 1)}:\radius) 
				arc ({360/\n * (3 -1)}:{360/\n * (3)}:\radius);
				
				\node at ({-360/\n * (4-4)}:1.2*\radius) {} ;
				\draw[-, >=latex] ({360/\n * (4 - 1)}:\radius) 
				arc ({360/\n * (4 -1)}:{360/\n * (4)}:\radius);
				
				\node at ({-360/\n * (5-4)}:1.2*\radius) {} ;
				\draw[-, >=latex] ({360/\n * (5 - 1)}:\radius) 
				arc ({360/\n * (5 -1)}:{360/\n * (5)}:\radius);
				
				\node at ({-360/\n * (6-4)}:1.2*\radius) {} ;
				\draw[-, >=latex] ({360/\n * (6 - 1)}:\radius) 
				arc ({360/\n * (6 -1)}:{360/\n * (6)}:\radius);
				
				\node at ({-360/\n * (7-4)}:1.2*\radius) {$k-4$ chords} ;
				\draw[-, >=latex] ({360/\n * (7 - 1)}:\radius) 
				arc ({360/\n * (7 -1)}:{360/\n * (7)}:\radius);
				
				\node at ({-360/\n * (8-4)}:1.2*\radius) {} ;
				\draw[-, >=latex] ({360/\n * (8 - 1)}:\radius) 
				arc ({360/\n * (8 -1)}:{360/\n * (8)}:\radius);
				
				\node at ({-360/\n * (9-4)}:1.2*\radius) {} ;
				\draw[-, >=latex] ({360/\n * (9 - 1)}:\radius) 
				arc ({360/\n * (9 -1)}:{360/\n * (9)}:\radius);
				
				\node at ({-360/\n * (10-4)}:1.2*\radius) {} ;
				\draw[-, >=latex] ({360/\n * (10 - 1)}:\radius) 
				arc ({360/\n * (10 -1)}:{360/\n * (10)}:\radius);
				
				\node at ({-360/\n * (11-4)}:1.2*\radius) {} ;
				\draw[-, >=latex] ({360/\n * (11 - 1)}:\radius) 
				arc ({360/\n * (11 -1)}:{360/\n * (11)}:\radius);
				
				\node at ({-360/\n * (12-4)}:1.2*\radius) {} ;
				\draw[-, >=latex] ({360/\n * (12 - 1)}:\radius) 
				arc ({360/\n * (12 -1)}:{360/\n * (12)}:\radius);
				
				\node at ({-360/\n * (13-4)}:1.2*\radius) {} ;
				\draw[-, >=latex] ({360/\n * (13 - 1)}:\radius) 
				arc ({360/\n * (13 -1)}:{360/\n * (13)}:\radius);
				
				\node at ({-360/\n * (14-4)}:1.2*\radius) {} ;
				\draw[-, >=latex] ({360/\n * (14 - 1)}:\radius) 
				arc ({360/\n * (14 -1)}:{360/\n * (14)}:\radius);
				
				\foreach \from/\to in {14/2,1/3,7/9,8/10}
				\draw ({360/\n * (\from - 1)}:\radius) to [bend left=30] ({360/\n * (\to - 1)}:\radius);
				
				\foreach \from/\to in {4/13,5/12,6/11}
				\draw ({360/\n * (\from - 1)}:\radius) -- ({360/\n * (\to - 1)}:\radius);
				\node[text width=6cm, anchor=west, right] at (-0.3,0)
				{$T_k$};
				\end{tikzpicture}
			\end{center}
			
			So, if we choose any subet $S\subseteq \{1,2,\dots,2n\}$ with $|S|=n-k$, by $\NCC(n,S)$ we can construct a noncrossing partial matching with $2k$ vertices unmatched. Among these $2k$ vertices, there are exactly $k$ ways to get a two crossing mathching in the type of $T_k$.
		\end{proof}
		
		For example, when $k=5$, we get the type $T_5$ with exactly one edges separating the two crossings, which is shown as the following.
		
		\begin{center}
			\begin{tikzpicture}
			\def \n {10}
			\def \radius {1cm}
			
			\foreach \s in {\n,...,1}
			{
				\draw[-, >=latex] ({360/\n * (\s - 1)}:\radius) 
				arc ({360/\n * (\s -1)}:{360/\n * (\s)}:\radius);
			}
			
			\foreach \from/\to in {1/9, 2/10, 4/6, 5/7, 3/8}
			\draw ({360/\n * (\from - 1)}:\radius) -- ({360/\n * (\to - 1)}:\radius);
			\node[text width=6cm, anchor=west, right] at (-0.3,-1.5)
			{$T_5$};
			\end{tikzpicture}
		\end{center}
		
		Here is an example in $T_5$. Again, for $n = 7$ and $S = \{1,2\}$. After we chose the $7-5=2$ points, $\NCC(7,\{1,2\})$ gives us a non-crossing sub-matching $(1,4)(2,3)$. There are ten points (i.e. $5,\cdots, 14$) left, which give us exactly five ways to get a two-crossing matching in the type of $T_5$. Thus, we can conclude that $|T_5|=5\binom{2n}{n-5}$.
		
		\begin{center}
			\begin{tikzpicture}
			
			\def \n {14}
			\def \radius {1.5cm}
			
			\foreach \s in {\n,...,1}
			{
				\node at ({-360/\n * (\s-4)}:1.2*\radius) {$\s$} ;
				\draw[-, >=latex] ({360/\n * (\s - 1)}:\radius) 
				arc ({360/\n * (\s -1)}:{360/\n * (\s)}:\radius);
			}
			
			\foreach \from/\to in {2/3, 1/4}
			\draw ({360/\n * (\from - 1)}:\radius) to [bend left=30] ({360/\n * (\to - 1)}:\radius);
			
			\end{tikzpicture}
		\end{center}
		
		Now, by using the combinatorial equalities $\sum_{j=0}^s \binom{s}{j}=2^s$ and $\binom{s}{j}=\binom{s}{s-j}$, we can prove the lemma. Note that $T_3,\cdots, T_n$ are disjoint cover of $P_{n,2}$.
		
		\begin{align*}
		|P_{n,2}| =\sum_{k=3}^n|T_k|& = \sum_{k=3}^n k\binom{2n}{n-k} =\sum_{i=0}^{n-3} (n-i)\binom{2n}{i}\\
		& =\sum_{i=0}^{n-1} (n-i)\binom{2n}{i}-2\binom{2n}{n-2}-\binom{2n}{n-1}\\
		& =n\sum_{i=0}^{n-1}\binom{2n}{i}-\sum_{i=1}^{n-1}i\binom{2n}{i}-2\binom{2n}{n-2}-\binom{2n}{n-1}\\
		& =n\cdot \frac{2^{2n}-\binom{2n}{n}}{2}-\sum_{i=1}^{n-1}2n\binom{2n-1}{i-1}-2\binom{2n}{n-2}-\binom{2n}{n-1}\\
		& =n(2^{2n-1}-\frac{1}{2}\binom{2n}{n})-2n\sum_{i=0}^{n-2}\binom{2n-1}{i}-2\binom{2n}{n-2}-\binom{2n}{n-1}\\
		& =n(2^{2n-1}-\frac{1}{2}\binom{2n}{n})-2n(\frac{2^{2n-1}}{2}-\binom{2n-1}{n-1})-2\binom{2n}{n-2}-\binom{2n}{n-1}\\
		& =n[2\binom{2n-1}{n-1}-\frac{1}{2}\binom{2n}{n}]-2\binom{2n}{n-2}-\binom{2n}{n-1}\\
		& =\frac{n+1}{2}\binom{2n}{n-1}-2\binom{2n}{n-2}-\binom{2n}{n-1}\\
		& =\frac{n+3}{2}\binom{2n}{n-3}
		\end{align*}
	\end{proof}
	
	Now, let
	
	\begin{equation}
	B_j=\{\tau\in P_{n,2}|\sigma_{2n}^j(\tau)=\tau\}
	\end{equation}
	
	be the subset of two-crossing matchings fixed by $\sigma_{2n}^j$.
	
	\begin{lemma}\thlabel{size2} 
		$|B_{2n}|=\frac{n+3}{2}\binom{2n}{n-3}$, and $|B_j|=0$ for $j\notin \{n, 2n\}$. Moreover,
		\begin{equation}
		|B_n|=
		\left\{\begin{array}{ll}
		\frac{n-1}{2}\binom{n}{\frac{n-1}{2}} & \text{ if $n$ is odd}\\
		\frac{n-2}{2}\binom{n}{\frac{n-2}{2}} & \text{ if $n$ is even}\
		\end{array}\right. . 
		\end{equation}
	\end{lemma}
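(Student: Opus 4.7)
The plan is to mirror the proof of Lemma~\thref{size} while accommodating the type decomposition $P_{n,2} = T_3 \sqcup T_4 \sqcup \cdots \sqcup T_n$ from Lemma~\thref{type}. The identity $|B_{2n}| = |P_{n,2}| = \frac{n+3}{2}\binom{2n}{n-3}$ is immediate from Lemma~\thref{bijection2}.

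To establish $|B_j| = 0$ for $j \notin \{n, 2n\}$, I will prove that every $\tau \in P_{n,2}$ has $d(\tau) \in \{n, 2n\}$. Suppose $\sigma_{2n}^j(\tau) = \tau$ for some $j \neq n, 2n$. Then $\sigma_{2n}^j$ permutes the two crossings of $\tau$ and, within each crossing, permutes the two edges; iterating, some power $\sigma_{2n}^{mj}$ must fix each crossing-edge individually. But a chord $(a,b)$ is fixed by $\sigma_{2n}^\ell$ with $0 < \ell < 2n$ only when $\ell = n$ and $(a,b)$ is a diameter $\{a, a+n\}$, so all three or four edges appearing in the two crossings would have to be diameters; since any two distinct diameters pass through the center and pairwise cross, this produces at least $\binom{3}{2} = 3$ crossings, contradicting $\tau \in P_{n,2}$. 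The delicate subcase is $d(\tau) = n/2$ for $n$ even, where the four crossing-edges would form a single $\sigma_{2n}^{n/2}$-orbit $\{e,\, \sigma_{2n}^{n/2}e,\, \sigma_{2n}^n e,\, \sigma_{2n}^{3n/2}e\}$; here I will verify directly that a non-diameter chord $e$ never crosses its $\sigma_{2n}^n$-translate, eliminating the only orbit pattern under which $\sigma_{2n}^{n/2}$ could swap the two crossings.

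For $|B_n|$, I use the $\sigma_{2n}$-equivariant bijection $\tau \leftrightarrow (S, c)$ for $\tau \in T_k$, where $|S| = n-k$ and $c \in \{1,\dots,k\}$ indexes the configuration. Invariance forces $S + n \equiv S \pmod{2n}$ and $\sigma_{2n}^n(c) = c$. As in Lemma~\thref{size}, $S + n = S$ requires $n \equiv k \pmod{2}$ and yields $\binom{n}{(n-k)/2}$ choices for $S$ (unions of the antipodal pairs $\{i, i+n\}$). When this holds, the $2k$ NCC-unmatched points form $k$ antipodal pairs, and $\sigma_{2n}^n$ acts on them as the half-rotation $p_i \mapsto p_{i+k}$ of the $2k$-cycle. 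Since the $k$ configurations of $T_k$ are parametrized by starting positions on this $2k$-cycle modulo $k$, the half-rotation (shift by $k$) preserves every configuration. Hence $T_k$ contributes $k\binom{n}{(n-k)/2}$ to $|B_n|$ when $n \equiv k \pmod{2}$ and zero otherwise, giving
\begin{equation*}
|B_n| = \sum_{\substack{3 \leq k \leq n \\ k \equiv n \pmod{2}}} k\binom{n}{(n-k)/2}.
\end{equation*}
Substituting $m = (n-k)/2$ and simplifying with $m\binom{n}{m} = n\binom{n-1}{m-1}$, Pascal's rule, and the standard symmetric partial-sum identities for $\sum_{m \leq \lfloor (n-1)/2 \rfloor}\binom{n}{m}$, this sum collapses to $\frac{n-1}{2}\binom{n}{(n-1)/2}$ for $n$ odd and $\frac{n-2}{2}\binom{n}{(n-2)/2}$ for $n$ even. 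The main obstacle is the period-exclusion step --- in particular ruling out $d(\tau) = n/2$ --- after which the remaining computation is a routine binomial manipulation.
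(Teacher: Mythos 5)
Your proposal is correct and follows essentially the same route as the paper: the same decomposition $P_{n,2}=T_3\sqcup T_4\sqcup\cdots\sqcup T_n$, the same count $|B_n\cap T_k|=k\binom{n}{(n-k)/2}$ when $k\equiv n\pmod 2$ (and $0$ otherwise), and the same binomial-sum collapse to the closed forms. If anything, your period-exclusion step is more careful than the paper's, which disposes of $|B_j|=0$ for $j\notin\{n,2n\}$ with a one-sentence geometric remark, whereas you explicitly rule out $d(\tau)=n/2$ via the observation that a non-diameter chord never crosses its antipodal translate.
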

	\begin{proof}
		It is easy to see that to get the same two-crossing matching by rotation, the only two possibilities are rotating one circle or one semi-circle, since each crossing must be sent to the position of the other crossing or itself. Thus, we have $|B_j|= 0$ for $j\notin\{n,2n\}$. Also, from the definition, we have $B_{2n}= P_{n,2}$.
		
		Now, we only need to check $|B_n|$. We only prove the case in which $n$ is odd in detail, the the same idea can be used to prove the case $n$ is even.
		
		Assume $n$ is odd. Note that the total number of edges is $n$. Hence, $B_n\cap T_k =\emptyset$ if $k$ is even, which implies that
		
		$$|B_n|=|B_n\cap T_3|+|B_n\cap T_5|+\cdots +|B_n\cap T_n|.$$
		
		Note that there is a $k$ to $1$ corresponding relation between $T_k$ and the $(n-k)$-element subsets of $\{1,2,\dots,2n\}$. Moreover, for any $\tau\in T_k$, we have $\tau\in B_n$ if any only if the corresponding $(n-k)$-element subset $S$ satisfies $i\in S \Leftrightarrow i + n \pmod{2n}\in S$. By pairing the $2n$ points as $(1, n + 1),\dots, (n,2n)$, it is equivalent to choose $\frac{n-k}{2}$ pairs of points from the $n$ pairs, which implies
		
		$$|B_n\cap T_k|=k\binom{n}{\frac{n-k}{2}}.$$
		
		Then, by the similar calculation as we used to calculate $|P_{n,2}|$, we have
		
		$$|B_n|=3\binom{n}{\frac{n-3}{2}}+5\binom{n}{\frac{n-5}{2}}+\cdots +n\binom{n}{0}=\frac{n-1}{2}\binom{n}{\frac{n-1}{2}}.$$
	\end{proof}
	
	With \thref{size2} proved, the only thing remained to check is the q-analog $f_{n,2}(\zeta_{2n}^j)$ agrees with $|B_j|$ at each $1\leqslant j\leqslant 2n$. 
	
	\begin{thm}[CSP for Two Crossing Matchings]\thlabel{main2}
		Let $\xi_{2n}=e^{\frac{\pi i}{n}}$, which is a primitive root of the equation $x^{2n}-1=0$. Let $f_{n,2}=\frac{[n+3]_q}{[2]_q}\binomial{2n}{n-3}_q$. Then $f_{n,2}(\xi_{2n}^j)=|\{\tau\in P_{n,2}|\sigma_{2n}^j(\tau)=\tau\}|$. In other words, $(P_{n,2},f_{n,2}(q),C_{2n})$ exhibits the cyclic sieving phenomenon.
	\end{thm}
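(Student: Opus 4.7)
The plan is to mimic the strategy of Theorem~\thref{main1}. Both sides of the target identity depend only on $d := 2n/\gcd(j,2n)$, the order of $\xi_{2n}^j$: the right-hand side does by Lemma~\thref{size2}, and the left-hand side does because $f_{n,2}$ has rational coefficients, so its values at the distinct primitive $d$-th roots are Galois conjugates and therefore coincide whenever they all equal the integer $|B_j|$. By Lemma~\thref{size2} it suffices to handle three situations: $d=1$ (i.e.\ $j=2n$), $d=2$ (i.e.\ $j=n$), and $d>2$ (all other $j$), with the last case expected to give $f_{n,2}(\xi_{2n}^j)=0$.

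The case $d=1$ is immediate: $f_{n,2}(1)=\tfrac{n+3}{2}\binom{2n}{n-3}=|B_{2n}|$. For $d=2$ I would evaluate $f_{n,2}(-1)$ using Facts~\thref{fact1} and~\thref{fact2}, splitting on the parity of $n$ exactly as in Theorem~\thref{main1}. When $n$ is odd, $[n+3]_{-1}/[2]_{-1}=(n+3)/2$ by Fact~\thref{fact2}, while tracking the even-indexed factors in $\binomial{2n}{n-3}_{-1}$ collapses it to $\binom{n}{(n-3)/2}$; the elementary rewrite $\tfrac{n+3}{2}\binom{n}{(n-3)/2}=\tfrac{n-1}{2}\binom{n}{(n-1)/2}$ then matches $|B_n|$. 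When $n$ is even, $[n+3]_q/[2]_q$ is not a polynomial on its own, but the factor $[2n]_q$ in $\binomial{2n}{n-3}_q$ supplies an extra $[2]_q$; carrying out the analogous parity count yields $\tfrac{n-2}{2}\binom{n}{(n-2)/2}$, matching $|B_n|$ in this case.

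For $d>2$, writing $n-3=q'd+r'$ with $0\le r'<d$ and using $2n=(2n/d)\,d$, the $q$-Lucas theorem gives
\[
\binomial{2n}{n-3}_{\xi_{2n}^j}=\binom{2n/d}{q'}\binomial{0}{r'}_{\xi_{2n}^j},
\]
which vanishes unless $r'=0$, i.e.\ unless $d\mid n-3$. Combined with $d\mid 2n$ this forces $d\mid 6$, leaving only the exceptional cases $d\in\{3,6\}$. In both of these, $d\mid(n-3)+6=n+3$, so $[n+3]_{\xi_{2n}^j}=0$; meanwhile $[2]_{\xi_{2n}^j}\neq 0$ since $d>2$, so the prefactor of $\binomial{2n}{n-3}_q$ kills the expression and $f_{n,2}(\xi_{2n}^j)=0$, matching $|B_j|=0$.

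The main obstacle I anticipate is the $d=2$, $n$-even subcase: since $f_{n,2}(q)$ is only a polynomial after $[2]_q$ cancels against an even-indexed factor of $\binomial{2n}{n-3}_q$, keeping careful track of which factors cancel and which contribute zeros under Fact~\thref{fact2} is where the computation is most error-prone. The $d>2$ analysis, by contrast, is clean once one observes that $d\mid 2n$ and $d\mid n-3$ together force $d\mid 6$, reducing the infinitely many remaining roots of unity to only two arithmetic cases.
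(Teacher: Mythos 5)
Your proposal is correct, and for the two cases the paper actually computes it follows the same route: evaluate $f_{n,2}$ at $q=1$ and $q=-1$ using Facts~\thref{fact1} and~\thref{fact2}, split on the parity of $n$, and match against Lemma~\thref{size2}. (Your identity $\tfrac{n+3}{2}\binom{n}{(n-3)/2}=\tfrac{n-1}{2}\binom{n}{(n-1)/2}$ reconciles your form of the odd-$n$ answer with the paper's, and your even-$n$ value $\tfrac{n-2}{2}\binom{n}{(n-2)/2}$ checks out; your caution about the $[2]_q$ in the denominator needing to cancel against $[2n]_q$ when $n$ is even is exactly the delicate point.) Where you genuinely diverge is the case $d>2$: the paper disposes of all remaining roots of unity with ``other cases can be proven by the same idea,'' i.e.\ implicitly by the same factor-by-factor counting of vanishing $[m]_q$'s, whereas you invoke the $q$-Lucas theorem to show $\binomial{2n}{n-3}_{\xi_{2n}^j}=0$ unless $d\mid n-3$, then note $d\mid 2n$ and $d\mid n-3$ force $d\mid 6$, and kill the two surviving cases $d\in\{3,6\}$ with $[n+3]_{\xi_{2n}^j}=0$ and $[2]_{\xi_{2n}^j}\neq 0$. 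This is cleaner and more rigorous than what the paper writes, since it reduces infinitely many root-of-unity evaluations to a finite, explicit case analysis rather than leaving them to an analogy. One small caveat: your opening Galois-conjugacy remark is phrased circularly (``coincide whenever they all equal the integer''); the correct statement is that once the value at one primitive $d$-th root is shown to be rational, all conjugate values agree with it --- but this is not load-bearing here, since for $d\le 2$ there is a unique primitive root and for $d>2$ you compute the value to be $0$ at every such root directly.
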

	
	\begin{proof}
		As an example, we prove $f_{n,2}(-1)=|B_n|$ when $n$ is odd. Other cases can be proven by the same idea. Applying \thref{fact1} and \thref{fact2}, we have
		
		\begin{align*}
		f_{n,2}(-1)=(\frac{[n+3]_q}{[2]_q}\binomial{2n}{n-3}_q)_{q=-1} & = \frac{[n+3]_{q=-1}}{[2]_{q=-1}}\frac{[2n]_{q=-1}[2n-1]_{q=-1}\cdots [n+5]_{q=-1}[n+4]_{q=-1}}{[n-3]_{q=-1}[n-2]_{q=-1}\cdots [2]_{q=-1}[1]_{q=-1}}\\
		& = \frac{[n+3]_{q=-1}}{[2]_{q=-1}}\frac{[2n]_{q=-1}[2n-2]_{q=-1}\cdots [n+5]_{q=-1}}{[n-3]_{q=-1}[n-5]_{q=-1}\cdots [2]_{q=-1}}, \text{ (We have used \thref{fact1} here.)}\\
		& = \frac{[2n]_{q=-1}}{[n-3]_{q=-1}}\frac{[2n-2]_{q=-1}}{[n-5]_{q=-1}}\cdots \frac{[n+5]_{q=-1}}{ [2]_{q=-1}}\frac{[n+3]_{q=-1}}{[2]_{q=-1}}\\
		& = \frac{n(n-1)\cdots \frac{n+5}{2}}{\frac{n-3}{2}\frac{n-5}{2}\cdots 1}\frac{n+3}{2}, \text{ (We have used \thref{fact2} here.)}\\
		& = \frac{n-1}{2}\frac{n(n-1)\cdots \frac{n+5}{2}\frac{n+3}{2}}{\frac{n-1}{2}\frac{n-3}{2}\cdots 2\cdot 1}\\
		& = \frac{n-1}{2}\binom{n}{\frac{n-1}{2}}=|B_n|.
		\end{align*}
	\end{proof}
	
	So we can conclude that $(P_{n,2},f_{n,2}(q), C_{2n})$ exhibits the Cyclic Sieving Phenomenon. 
	
	\subsection{Three-crossing Matchings}
	
	The ideals of proving the Cyclic Sieving Phenomenon of three-crossing matchings are completed covered by the methods we used to prove the one-crossing and two-crossing cases. 
	
	\begin{thm}[CSP for Three Crossing Matchings]\thlabel{main3}
		Let $\xi_{2n}=e^{\frac{\pi i}{n}}$, which is a primitive root of the equation $x^{2n}-1=0$. Let $f_{n,3}(q)=\frac{1}{[3]_q} \binomial{n+5}{2}_q \binomial{2n}{n-4}_q+\binomial{2n}{n-3}_q$. Then $f_{n,3}(\xi_{2n}^j)=|\{\tau\in P_{n,3}|\sigma_{2n}^j(\tau)=\tau\}|$. In other words, $(P_{n,3},f_{n,3}(q),C_{2n})$ exhibits the cyclic sieving phenomenon.
	\end{thm}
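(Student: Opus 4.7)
The plan is to extend the strategy of \thref{main2} to three crossings, with an additional layer of classification forced by the richer combinatorics. First I would partition $P_{n,3}$ into types according to how the three crossings interact. A configuration absent in the one- and two-crossing cases is a \emph{triangle}, i.e.\ three edges that pairwise cross. Only three edges are used to produce all three crossings, so applying $\NCC(n,S)$ to an $(n-3)$-element subset $S$ yields a noncrossing partial matching with six unmatched points, and these six points admit exactly one triangle completion. This family contributes $\binom{2n}{n-3}$, matching the second term of $|P_{n,3}|$.

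For the remaining three-crossing matchings, at least four edges participate in producing the crossings, and I would extend the $T_k$ stratification of \thref{bijection2} by further subdividing each $T_k$ according to whether the three crossings form a single cluster on four edges, two clusters (one double-crossing plus one simple crossing) with separating edges between them, or three isolated simple crossings. Each subtype yields a fixed multiplicity $c_k$-to-one correspondence with $(n-k)$-element subsets through $\NCC$, where $c_k$ counts the admissible three-crossing completions on the $2k$ free points. Summing over $k$ and applying the binomial manipulations used at the end of \thref{bijection2} should collapse the total to $\frac{1}{3}\binom{n+5}{2}\binom{2n}{n-4}$.

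To compute the fixed-point counts $D_j := \{\tau \in P_{n,3} : \sigma_{2n}^j(\tau) = \tau\}$, observe that $\sigma_{2n}^j$ must permute the three crossings of $\tau$, and the induced permutation is cyclic, so it is either the identity or a $3$-cycle. The identity case forces $j$ into a small set of divisors of $2n$ (with the arithmetic refinement of \thref{dtau}), and $|D_j|$ is obtained by choosing NCC-subsets $S$ invariant under the orbit pairing of $\sigma_{2n}^j$, in direct analogy with the $\beta_i$ and $\gamma_i$ constructions of \thref{size}. The $3$-cycle case is genuinely new and requires $3 \mid 2n/\gcd(j,2n)$; here the triangle family in particular contributes rotationally symmetric matchings, which I would enumerate by quotienting the admissible NCC subsets by the induced $\Z/3$-orbits on $\{1,\dots,2n\}$.

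The main obstacle is handling the factor $\frac{1}{[3]_q}$ in $f_{n,3}(q)$. When $\xi_{2n}^j$ is a primitive third or sixth root of unity, $[3]_q$ vanishes, so cancellation must come from a compensating zero of $\binomial{n+5}{2}_q$ or $\binomial{2n}{n-4}_q$; verifying that the orders of vanishing line up and that the limiting value reproduces $|D_j|$ (including precisely the new $3$-cycle contributions from rotated triangles) requires careful case analysis modulo $6$. Away from these exceptional roots, the evaluations reduce to repeated applications of \thref{fact1} and \thref{fact2}, exactly as in \thref{main1} and \thref{main2}, which completes the CSP verification.
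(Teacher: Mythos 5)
Your overall strategy --- stratifying $P_{n,3}$ by crossing configuration via $\NCC$, counting fixed points through rotation-invariant subsets, and evaluating $f_{n,3}$ at roots of unity with the parity and mod-$3$ facts --- is the same as the paper's, and your identification of the triangle family (three pairwise crossing edges, unique completion of the six free points, contributing $\binom{2n}{n-3}$) is exactly right. The paper's only genuinely detailed new ingredient is the case $j=\frac{2n}{3}$ when $3\mid n$: it partitions $F=\{\tau\in P_{n,3} : \sigma_{2n}^{2n/3}(\tau)=\tau\}$ into types $R_1,\dots,R_{n/3}$, proves $|F\cap R_1|=\binom{2n/3}{(n-3)/3}$ and $|F\cap R_k|=2k\binom{2n/3}{(n-3k)/3}$, sums to $|F|=\frac{n}{3}\binom{2n/3}{n/3-1}$, and matches this against $f_{n,3}(e^{2\pi i/3})$; the factor $\frac{1}{[3]_q}$ is disposed of not by a vanishing-order/limit argument but by the exact identity $\frac{[m_1]_q}{[m_2]_q}(u)=\frac{m_1/3}{m_2/3}$ for $3\mid m_1,m_2$ (and note that $[3]_q$ vanishes only at primitive cube roots of unity, not at primitive sixth roots).

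The genuine gap is in your classification of the induced permutation of $\sigma_{2n}^j$ on the three crossings: a cyclic subgroup of $S_3$ can also be generated by a transposition, and you have excluded exactly the case that carries all of $|D_n|$ when $n$ is even. Indeed, a single crossing fixed setwise by the half-turn $\sigma_{2n}^n$ must consist of two diameters $\{a,a+n\},\{b,b+n\}$ (an edge $e$ never crosses its antipode $e+n$), and since the non-diameter edges of a $\sigma_{2n}^n$-invariant matching come in antipodal pairs, the number of diameters is congruent to $n$ modulo $2$. For $n$ even this rules out the identity action on the crossings (which would force an odd number, three, of pairwise-crossing diameters), so every $\tau$ fixed by $\sigma_{2n}^n$ has one crossing fixed and the other two swapped --- for instance two crossing diameters together with a chord $g$ and its antipode $g+n$, each crossing one of the diameters. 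Your framework as written would therefore return $0$ for the fixed-point count at $j=n$ when $n$ is even, whereas the correct value is $\frac{n+4}{2}\binom{n}{\frac{n-4}{2}}$ and $f_{n,3}(-1)$ is visibly nonzero there. You need to add the transposition case to the fixed-point analysis, and for the $3$-cycle case you should sharpen the necessary condition $3\mid 2n/\gcd(j,2n)$ to the conclusion $j\in\{\frac{2n}{3},\frac{4n}{3}\}$, since otherwise you have not established $|D_j|=0$ for the remaining values of $j$.
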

	
	\begin{proof}
		Using the same technique, we can check the following:
		
		$$f_{n,3}(1)=|P_{n,3}|=\frac{1}{3}\binom{n+5}{2}\binom{2n}{n-4}+\binom{2n}{n-3},$$
		
		\begin{equation*}
		f_{n,3}(-1)=|\{\tau\in P_{n,3}|\sigma_{2n}^n(\tau)=\tau\}|=
		\left\{
		\begin{array}{ll}
		\binom{n}{\frac{n-3}{2}} & \text{ if $n$ is odd} \\
		\frac{n+4}{2}\binom{n}{\frac{n-4}{2}} & \text{ if $n$ is even} 
		\end{array}\right.,
		\end{equation*}
		
		and
		
		$$f_{n,3}(\xi_{2n}^j)=|\{\tau\in P_{n,3}|\sigma_{2n}^j(\tau)=\tau\}|=0, \text{ for } j\neq 0,\frac{2n}{3},n,\frac{4n}{3}.$$
		
		Note that different from one crossing matchings and two crossing matchings, some of the three crossing matchings can be mapped to itself by rotating one-third of the circle (i.e $\{\tau\in P_{n,3}|\sigma_{2n}^{\frac{2n}{3}}(\tau)=\tau\}\neq \emptyset$ if $3|n$). So we still need to check that 
		
		\begin{equation}\label{one-third}
		f_{n,3}(u)=f_{n,3}(u^2)=|\{\tau\in P_{n,3}|\sigma_{2n}^{\frac{2n}{3}}(\tau)=\tau\}|,
		\end{equation}
		
		where $u=e^{\frac{2\pi i}{3}}$. 
		
		For $3|n$, we apply $\NCC$ algorithm. Let $F=\{\tau\in P_{n,3}|\sigma_{2n}^{\frac{2n}{3}}(\tau)=\tau\}$, the set of three crossing matchings that are invariant under the map $\sigma_{2n}^{\frac{2n}{3}}$. Again, we divide $F$ into different types according to the number of edges separating the three crossing. The first type $R_1$ is the three crossing matchings containing the following submatching:
		
		\begin{center}
			\begin{tikzpicture}
			
			\def \n {14}
			\def \radius {2cm}
			
			\node at ({-360/\n * (1-4)}:1.2*\radius) {} ;
			\draw[-, >=latex] ({360/\n * (1 - 1)}:\radius) 
			arc ({360/\n * (1 -1)}:{360/\n * (1)}:\radius);
			
			\node at ({-360/\n * (2-4)}:1.2*\radius) {} ;
			\draw[-, >=latex] ({360/\n * (2 - 1)}:\radius) 
			arc ({360/\n * (2 -1)}:{360/\n * (2)}:\radius);
			
			\node at ({-360/\n * (3-4)}:1.2*\radius) {} ;
			\draw[-, >=latex] ({360/\n * (3 - 1)}:\radius) 
			arc ({360/\n * (3 -1)}:{360/\n * (3)}:\radius);
			
			\node at ({-360/\n * (4-4)}:1.2*\radius) {} ;
			\draw[-, >=latex] ({360/\n * (4 - 1)}:\radius) 
			arc ({360/\n * (4 -1)}:{360/\n * (4)}:\radius);
			
			\node at ({-360/\n * (5-4)}:1.2*\radius) {} ;
			\draw[-, >=latex] ({360/\n * (5 - 1)}:\radius) 
			arc ({360/\n * (5 -1)}:{360/\n * (5)}:\radius);
			
			\node at ({-360/\n * (6-4)}:1.2*\radius) {} ;
			\draw[-, >=latex] ({360/\n * (6 - 1)}:\radius) 
			arc ({360/\n * (6 -1)}:{360/\n * (6)}:\radius);
			
			\node at ({-360/\n * (7-4)}:1.2*\radius) {} ;
			\draw[-, >=latex] ({360/\n * (7 - 1)}:\radius) 
			arc ({360/\n * (7 -1)}:{360/\n * (7)}:\radius);
			
			\node at ({-360/\n * (8-4)}:1.2*\radius) {} ;
			\draw[-, >=latex] ({360/\n * (8 - 1)}:\radius) 
			arc ({360/\n * (8 -1)}:{360/\n * (8)}:\radius);
			
			\node at ({-360/\n * (9-4)}:1.2*\radius) {} ;
			\draw[-, >=latex] ({360/\n * (9 - 1)}:\radius) 
			arc ({360/\n * (9 -1)}:{360/\n * (9)}:\radius);
			
			\node at ({-360/\n * (10-4)}:1.2*\radius) {} ;
			\draw[-, >=latex] ({360/\n * (10 - 1)}:\radius) 
			arc ({360/\n * (10 -1)}:{360/\n * (10)}:\radius);
			
			\node at ({-360/\n * (11-4)}:1.2*\radius) {} ;
			\draw[-, >=latex] ({360/\n * (11 - 1)}:\radius) 
			arc ({360/\n * (11 -1)}:{360/\n * (11)}:\radius);
			
			\node at ({-360/\n * (12-4)}:1.2*\radius) {} ;
			\draw[-, >=latex] ({360/\n * (12 - 1)}:\radius) 
			arc ({360/\n * (12 -1)}:{360/\n * (12)}:\radius);
			
			\node at ({-360/\n * (13-4)}:1.2*\radius) {} ;
			\draw[-, >=latex] ({360/\n * (13 - 1)}:\radius) 
			arc ({360/\n * (13 -1)}:{360/\n * (13)}:\radius);
			
			\node at ({-360/\n * (14-4)}:1.2*\radius) {} ;
			\draw[-, >=latex] ({360/\n * (14 - 1)}:\radius) 
			arc ({360/\n * (14 -1)}:{360/\n * (14)}:\radius);
			
			\foreach \from/\to in {}
			\draw ({360/\n * (\from - 1)}:\radius) to [bend left=30] ({360/\n * (\to - 1)}:\radius);
			
			\foreach \from/\to in {14/6,2/9,4/11}
			\draw ({360/\n * (\from - 1)}:\radius) -- ({360/\n * (\to - 1)}:\radius); 
			\node[text width=6cm, anchor=west, right] at (-0.5,-2.5)
			{$R_1$};
			\end{tikzpicture}
		\end{center}
		
		Note that if $n$ is fixed, each three crossing matching of $F\cap R_1$ can be obtained by the following three steps:
		
		\begin{enumerate}
			\item Pick a subset $S\subset\{1,2,\dots,2n\}$ such that $|S|=n-3$ and $i\in S\Leftrightarrow i+\frac{2n}{3}\in S\Leftrightarrow i+\frac{4n}{3}\in S$ (in order to make sure the matching $\tau$ we get satisfies $\sigma_{2n}^{\frac{2n}{3}}(\tau)=\tau$).
			
			\item Apply $\NCC(n,S)$ and get a noncrossing partial matchings with exactly $6$ points unmatched.
			
			\item Then we have exactly one way to construct a $R_1$ type three crossing matching.
		\end{enumerate}
		
		So, we can divide the numbers $\{1,2,\dots,2n\}$ into $\frac{2n}{3}$ parts as the following:
		
		\begin{align*}
		& \phi_1=\{1, 1+\frac{2}{3}n,1+\frac{4}{3}n\},\\
		& \phi_2=\{2, 2+\frac{2}{3}n, 2+\frac{4}{3}n\},\\
		& \dots \\
		& \phi_{\frac{2n}{3}}=\{\frac{2}{3}n, \frac{4}{3}n, 2n\}.
		\end{align*}
		
		In step 1, picking $S$ is the same as picking $\frac{n-3}{3}$ parts from above. Hence, there are $\binom{\frac{2}{3}n}{\frac{n-3}{3}}$ ways to select such subset, which means 
		
		$$|F\cap R_1|=\binom{\frac{2}{3}n}{\frac{n-3}{3}}.$$
		
		Now, we look at other types of three crossing matchings. Let $R_2$ be the type of three crossing matchings contain the following partial matching.
		
		\begin{center}
		\begin{tikzpicture}
		
		\def \n {18}
		\def \radius {1.5cm}
		
		\node at ({-360/\n * (1-4)}:1.2*\radius) {} ;
		\draw[-, >=latex] ({360/\n * (1 - 1)}:\radius) 
		arc ({360/\n * (1 -1)}:{360/\n * (1)}:\radius);
		
		\node at ({-360/\n * (2-4)}:1.2*\radius) {} ;
		\draw[-, >=latex] ({360/\n * (2 - 1)}:\radius) 
		arc ({360/\n * (2 -1)}:{360/\n * (2)}:\radius);
		
		\node at ({-360/\n * (3-4)}:1.2*\radius) {} ;
		\draw[-, >=latex] ({360/\n * (3 - 1)}:\radius) 
		arc ({360/\n * (3 -1)}:{360/\n * (3)}:\radius);
		
		\node at ({-360/\n * (4-4)}:1.2*\radius) {} ;
		\draw[-, >=latex] ({360/\n * (4 - 1)}:\radius) 
		arc ({360/\n * (4 -1)}:{360/\n * (4)}:\radius);
		
		\node at ({-360/\n * (5-4)}:1.2*\radius) {} ;
		\draw[-, >=latex] ({360/\n * (5 - 1)}:\radius) 
		arc ({360/\n * (5 -1)}:{360/\n * (5)}:\radius);
		
		\node at ({-360/\n * (6-4)}:1.2*\radius) {} ;
		\draw[-, >=latex] ({360/\n * (6 - 1)}:\radius) 
		arc ({360/\n * (6 -1)}:{360/\n * (6)}:\radius);
		
		\node at ({-360/\n * (7-4)}:1.2*\radius) {} ;
		\draw[-, >=latex] ({360/\n * (7 - 1)}:\radius) 
		arc ({360/\n * (7 -1)}:{360/\n * (7)}:\radius);
		
		\node at ({-360/\n * (8-4)}:1.2*\radius) {} ;
		\draw[-, >=latex] ({360/\n * (8 - 1)}:\radius) 
		arc ({360/\n * (8 -1)}:{360/\n * (8)}:\radius);
		
		\node at ({-360/\n * (9-4)}:1.2*\radius) {} ;
		\draw[-, >=latex] ({360/\n * (9 - 1)}:\radius) 
		arc ({360/\n * (9 -1)}:{360/\n * (9)}:\radius);
		
		\node at ({-360/\n * (10-4)}:1.2*\radius) {} ;
		\draw[-, >=latex] ({360/\n * (10 - 1)}:\radius) 
		arc ({360/\n * (10 -1)}:{360/\n * (10)}:\radius);
		
		\node at ({-360/\n * (11-4)}:1.2*\radius) {} ;
		\draw[-, >=latex] ({360/\n * (11 - 1)}:\radius) 
		arc ({360/\n * (11 -1)}:{360/\n * (11)}:\radius);
		
		\node at ({-360/\n * (12-4)}:1.2*\radius) {} ;
		\draw[-, >=latex] ({360/\n * (12 - 1)}:\radius) 
		arc ({360/\n * (12 -1)}:{360/\n * (12)}:\radius);
		
		\node at ({-360/\n * (13-4)}:1.2*\radius) {} ;
		\draw[-, >=latex] ({360/\n * (13 - 1)}:\radius) 
		arc ({360/\n * (13 -1)}:{360/\n * (13)}:\radius);
		
		\node at ({-360/\n * (14-4)}:1.2*\radius) {} ;
		\draw[-, >=latex] ({360/\n * (14 - 1)}:\radius) 
		arc ({360/\n * (14 -1)}:{360/\n * (14)}:\radius);
		
		\node at ({-360/\n * (15-4)}:1.2*\radius) {} ;
		\draw[-, >=latex] ({360/\n * (15 - 1)}:\radius) 
		arc ({360/\n * (15 -1)}:{360/\n * (15)}:\radius);
		
		\node at ({-360/\n * (16-4)}:1.2*\radius) {} ;
		\draw[-, >=latex] ({360/\n * (16 - 1)}:\radius) 
		arc ({360/\n * (16 -1)}:{360/\n * (16)}:\radius);
		
		\node at ({-360/\n * (17-4)}:1.2*\radius) {} ;
		\draw[-, >=latex] ({360/\n * (17 - 1)}:\radius) 
		arc ({360/\n * (17 -1)}:{360/\n * (17)}:\radius);
		
		\node at ({-360/\n * (18-4)}:1.2*\radius) {} ;
		\draw[-, >=latex] ({360/\n * (18 - 1)}:\radius) 
		arc ({360/\n * (18 -1)}:{360/\n * (18)}:\radius);
		
		\foreach \from/\to in {1/3,2/4,7/9,8/10,13/15,14/16}
		\draw ({360/\n * (\from - 1)}:\radius) to [bend left=30] ({360/\n * (\to - 1)}:\radius);
		
		\foreach \from/\to in {}
		\draw ({360/\n * (\from - 1)}:\radius) -- ({360/\n * (\to - 1)}:\radius);
		\text{$ $} 
		\end{tikzpicture}
	\end{center}
	
	Then, for $2\leqslant k\leqslant \frac{n}{3}$, let $R_k$ be the type of three crossing matchings that contain the partial matching obtained by adding $k-2$ chords just outside each crossings in $R_2$, but do not contain the partial matching obtained by adding $k-1$ chords just outside each crossings in $R_2$. 
	
	For example, the type $R_3$ is the type of three crossing matchings that contain the partial matching obtain by adding $1$ chords just outside each crossings in $R_2$ (the partial matching $M_1$ below), but do not contain the partial matching obtain by adding $2$ chords just outside each crossings in $R_2$ (the partial matching $M_2$ below).
	
	\begin{center}
		\begin{tikzpicture}
		
		\def \n {18}
		\def \radius {1.5cm}
		
		\node at ({-360/\n * (1-4)}:1.2*\radius) {} ;
		\draw[-, >=latex] ({360/\n * (1 - 1)}:\radius) 
		arc ({360/\n * (1 -1)}:{360/\n * (1)}:\radius);
		
		\node at ({-360/\n * (2-4)}:1.2*\radius) {} ;
		\draw[-, >=latex] ({360/\n * (2 - 1)}:\radius) 
		arc ({360/\n * (2 -1)}:{360/\n * (2)}:\radius);
		
		\node at ({-360/\n * (3-4)}:1.2*\radius) {} ;
		\draw[-, >=latex] ({360/\n * (3 - 1)}:\radius) 
		arc ({360/\n * (3 -1)}:{360/\n * (3)}:\radius);
		
		\node at ({-360/\n * (4-4)}:1.2*\radius) {} ;
		\draw[-, >=latex] ({360/\n * (4 - 1)}:\radius) 
		arc ({360/\n * (4 -1)}:{360/\n * (4)}:\radius);
		
		\node at ({-360/\n * (5-4)}:1.2*\radius) {} ;
		\draw[-, >=latex] ({360/\n * (5 - 1)}:\radius) 
		arc ({360/\n * (5 -1)}:{360/\n * (5)}:\radius);
		
		\node at ({-360/\n * (6-4)}:1.2*\radius) {} ;
		\draw[-, >=latex] ({360/\n * (6 - 1)}:\radius) 
		arc ({360/\n * (6 -1)}:{360/\n * (6)}:\radius);
		
		\node at ({-360/\n * (7-4)}:1.2*\radius) {} ;
		\draw[-, >=latex] ({360/\n * (7 - 1)}:\radius) 
		arc ({360/\n * (7 -1)}:{360/\n * (7)}:\radius);
		
		\node at ({-360/\n * (8-4)}:1.2*\radius) {} ;
		\draw[-, >=latex] ({360/\n * (8 - 1)}:\radius) 
		arc ({360/\n * (8 -1)}:{360/\n * (8)}:\radius);
		
		\node at ({-360/\n * (9-4)}:1.2*\radius) {} ;
		\draw[-, >=latex] ({360/\n * (9 - 1)}:\radius) 
		arc ({360/\n * (9 -1)}:{360/\n * (9)}:\radius);
		
		\node at ({-360/\n * (10-4)}:1.2*\radius) {} ;
		\draw[-, >=latex] ({360/\n * (10 - 1)}:\radius) 
		arc ({360/\n * (10 -1)}:{360/\n * (10)}:\radius);
		
		\node at ({-360/\n * (11-4)}:1.2*\radius) {} ;
		\draw[-, >=latex] ({360/\n * (11 - 1)}:\radius) 
		arc ({360/\n * (11 -1)}:{360/\n * (11)}:\radius);
		
		\node at ({-360/\n * (12-4)}:1.2*\radius) {} ;
		\draw[-, >=latex] ({360/\n * (12 - 1)}:\radius) 
		arc ({360/\n * (12 -1)}:{360/\n * (12)}:\radius);
		
		\node at ({-360/\n * (13-4)}:1.2*\radius) {} ;
		\draw[-, >=latex] ({360/\n * (13 - 1)}:\radius) 
		arc ({360/\n * (13 -1)}:{360/\n * (13)}:\radius);
		
		\node at ({-360/\n * (14-4)}:1.2*\radius) {} ;
		\draw[-, >=latex] ({360/\n * (14 - 1)}:\radius) 
		arc ({360/\n * (14 -1)}:{360/\n * (14)}:\radius);
		
		\node at ({-360/\n * (15-4)}:1.2*\radius) {} ;
		\draw[-, >=latex] ({360/\n * (15 - 1)}:\radius) 
		arc ({360/\n * (15 -1)}:{360/\n * (15)}:\radius);
		
		\node at ({-360/\n * (16-4)}:1.2*\radius) {} ;
		\draw[-, >=latex] ({360/\n * (16 - 1)}:\radius) 
		arc ({360/\n * (16 -1)}:{360/\n * (16)}:\radius);
		
		\node at ({-360/\n * (17-4)}:1.2*\radius) {} ;
		\draw[-, >=latex] ({360/\n * (17 - 1)}:\radius) 
		arc ({360/\n * (17 -1)}:{360/\n * (17)}:\radius);
		
		\node at ({-360/\n * (18-4)}:1.2*\radius) {} ;
		\draw[-, >=latex] ({360/\n * (18 - 1)}:\radius) 
		arc ({360/\n * (18 -1)}:{360/\n * (18)}:\radius);
		
		\foreach \from/\to in {1/3,2/4,7/9,8/10,13/15,14/16,18/5,6/11,12/17}
		\draw ({360/\n * (\from - 1)}:\radius) to [bend left=30] ({360/\n * (\to - 1)}:\radius);
		
		\foreach \from/\to in {}
		\draw ({360/\n * (\from - 1)}:\radius) -- ({360/\n * (\to - 1)}:\radius);
		\node[text width=6cm, anchor=west, right] at (-0.3,-1.8)
		{$M_1$};
		\end{tikzpicture}
	\end{center}
	
	\begin{center}
		\begin{tikzpicture}
		
		\def \n {24}
		\def \radius {1.5cm}
		
		\node at ({-360/\n * (1-4)}:1.2*\radius) {} ;
		\draw[-, >=latex] ({360/\n * (1 - 1)}:\radius) 
		arc ({360/\n * (1 -1)}:{360/\n * (1)}:\radius);
		
		\node at ({-360/\n * (2-4)}:1.2*\radius) {} ;
		\draw[-, >=latex] ({360/\n * (2 - 1)}:\radius) 
		arc ({360/\n * (2 -1)}:{360/\n * (2)}:\radius);
		
		\node at ({-360/\n * (3-4)}:1.2*\radius) {} ;
		\draw[-, >=latex] ({360/\n * (3 - 1)}:\radius) 
		arc ({360/\n * (3 -1)}:{360/\n * (3)}:\radius);
		
		\node at ({-360/\n * (4-4)}:1.2*\radius) {} ;
		\draw[-, >=latex] ({360/\n * (4 - 1)}:\radius) 
		arc ({360/\n * (4 -1)}:{360/\n * (4)}:\radius);
		
		\node at ({-360/\n * (5-4)}:1.2*\radius) {} ;
		\draw[-, >=latex] ({360/\n * (5 - 1)}:\radius) 
		arc ({360/\n * (5 -1)}:{360/\n * (5)}:\radius);
		
		\node at ({-360/\n * (6-4)}:1.2*\radius) {} ;
		\draw[-, >=latex] ({360/\n * (6 - 1)}:\radius) 
		arc ({360/\n * (6 -1)}:{360/\n * (6)}:\radius);
		
		\node at ({-360/\n * (7-4)}:1.2*\radius) {} ;
		\draw[-, >=latex] ({360/\n * (7 - 1)}:\radius) 
		arc ({360/\n * (7 -1)}:{360/\n * (7)}:\radius);
		
		\node at ({-360/\n * (8-4)}:1.2*\radius) {} ;
		\draw[-, >=latex] ({360/\n * (8 - 1)}:\radius) 
		arc ({360/\n * (8 -1)}:{360/\n * (8)}:\radius);
		
		\node at ({-360/\n * (9-4)}:1.2*\radius) {} ;
		\draw[-, >=latex] ({360/\n * (9 - 1)}:\radius) 
		arc ({360/\n * (9 -1)}:{360/\n * (9)}:\radius);
		
		\node at ({-360/\n * (10-4)}:1.2*\radius) {} ;
		\draw[-, >=latex] ({360/\n * (10 - 1)}:\radius) 
		arc ({360/\n * (10 -1)}:{360/\n * (10)}:\radius);
		
		\node at ({-360/\n * (11-4)}:1.2*\radius) {} ;
		\draw[-, >=latex] ({360/\n * (11 - 1)}:\radius) 
		arc ({360/\n * (11 -1)}:{360/\n * (11)}:\radius);
		
		\node at ({-360/\n * (12-4)}:1.2*\radius) {} ;
		\draw[-, >=latex] ({360/\n * (12 - 1)}:\radius) 
		arc ({360/\n * (12 -1)}:{360/\n * (12)}:\radius);
		
		\node at ({-360/\n * (13-4)}:1.2*\radius) {} ;
		\draw[-, >=latex] ({360/\n * (13 - 1)}:\radius) 
		arc ({360/\n * (13 -1)}:{360/\n * (13)}:\radius);
		
		\node at ({-360/\n * (14-4)}:1.2*\radius) {} ;
		\draw[-, >=latex] ({360/\n * (14 - 1)}:\radius) 
		arc ({360/\n * (14 -1)}:{360/\n * (14)}:\radius);
		
		\node at ({-360/\n * (15-4)}:1.2*\radius) {} ;
		\draw[-, >=latex] ({360/\n * (15 - 1)}:\radius) 
		arc ({360/\n * (15 -1)}:{360/\n * (15)}:\radius);
		
		\node at ({-360/\n * (16-4)}:1.2*\radius) {} ;
		\draw[-, >=latex] ({360/\n * (16 - 1)}:\radius) 
		arc ({360/\n * (16 -1)}:{360/\n * (16)}:\radius);
		
		\node at ({-360/\n * (17-4)}:1.2*\radius) {} ;
		\draw[-, >=latex] ({360/\n * (17 - 1)}:\radius) 
		arc ({360/\n * (17 -1)}:{360/\n * (17)}:\radius);
		
		\node at ({-360/\n * (18-4)}:1.2*\radius) {} ;
		\draw[-, >=latex] ({360/\n * (18 - 1)}:\radius) 
		arc ({360/\n * (18 -1)}:{360/\n * (18)}:\radius);
		
		\node at ({-360/\n * (19-4)}:1.2*\radius) {} ;
		\draw[-, >=latex] ({360/\n * (19 - 1)}:\radius) 
		arc ({360/\n * (19 -1)}:{360/\n * (19)}:\radius);
		
		\node at ({-360/\n * (20-4)}:1.2*\radius) {} ;
		\draw[-, >=latex] ({360/\n * (20 - 1)}:\radius) 
		arc ({360/\n * (20 -1)}:{360/\n * (20)}:\radius);
		
		\node at ({-360/\n * (21-4)}:1.2*\radius) {} ;
		\draw[-, >=latex] ({360/\n * (21 - 1)}:\radius) 
		arc ({360/\n * (21 -1)}:{360/\n * (21)}:\radius);
		
		\node at ({-360/\n * (22-4)}:1.2*\radius) {} ;
		\draw[-, >=latex] ({360/\n * (22 - 1)}:\radius) 
		arc ({360/\n * (22 -1)}:{360/\n * (22)}:\radius);
		
		\node at ({-360/\n * (23-4)}:1.2*\radius) {} ;
		\draw[-, >=latex] ({360/\n * (23 - 1)}:\radius) 
		arc ({360/\n * (23 -1)}:{360/\n * (23)}:\radius);
		
		\node at ({-360/\n * (24-4)}:1.2*\radius) {} ;
		\draw[-, >=latex] ({360/\n * (24 - 1)}:\radius) 
		arc ({360/\n * (24 -1)}:{360/\n * (24)}:\radius);
		
		\foreach \from/\to in {1/3,2/4,9/11,10/12,17/19,18/20,24/5,23/6,8/13,7/14,16/21,15/22}
		\draw ({360/\n * (\from - 1)}:\radius) to [bend left=30] ({360/\n * (\to - 1)}:\radius);
		
		\foreach \from/\to in {}
		\draw ({360/\n * (\from - 1)}:\radius) -- ({360/\n * (\to - 1)}:\radius);
		\node[text width=6cm, anchor=west, right] at (-0.3,-1.8)
		{$M_2$};
		\end{tikzpicture}
	\end{center}
	
	Note that $\{F\cap R_1, F\cap R_2,\dots, F\cap R_{\frac{n}{3}}\}$ is a partition of $F$. We now prove the following lemma, which is useful to check \ref{one-third}.
	
	\begin{lemma}\thlabel{threecrossingpartition}
		$|F\cap R_k|=2k\binom{\frac{2}{3}n}{\frac{n-3k}{3}}$ for $2\leqslant k\leqslant \frac{n}{3}$.
	\end{lemma}
	\begin{proof}[Proof of \thref{threecrossingpartition}]
		We apply $\NCC$ algorithm again. For $2\leqslant k\leqslant \frac{n}{3}$, each three crossing matching of $F\cap R_k$ can be obtained by the following three steps:
		
		\begin{enumerate}
			\item Pick a subset $S\subseteq\{1,2,\dots,2n\}$ such that $|S|=n-3k$ and $i\in S\Leftrightarrow i+\frac{2n}{3}\in S\Leftrightarrow i+\frac{4n}{3}\in S$ (in order to make sure the matching $\tau$ we get satisfies $\sigma_{2n}^{\frac{2n}{3}}(\tau)=\tau$).
			
			\item Apply $\NCC(n,S)$ and get a noncrossing partial matchings with exactly $3k$ points unmatched.
			
			\item Then we have exactly $2k$ ways to construct a $R_k$ type three crossing matching.
		\end{enumerate}
		
		So there is a $2k$ to $1$ corresponding relation between the elements in $F\cap R_k$ and the subsets obtained by picking $\frac{n-3k}{3}$ of the $\phi_i$'s, where $\phi_i=\{i, i+\frac{2}{3}n, i+\frac{4}{3}n\}$ for $1\leqslant i\leqslant \frac{2n}{3}$.
		
		This implies that 
		
		$$|F\cap R_k|=2k\binom{\frac{2}{3}n}{\frac{n-3k}{3}}, \text{ for } 2\leqslant k\leqslant \frac{n}{3}.$$
	\end{proof}
	
	So, we can calculate $|F|$ by this partition and the combinatorial equalities $\sum_{j=0}^s \binom{s}{j}=2^s$ and $j\binom{s}{j}=s\binom{s-1}{j-1}$:
	
	\begin{align*}
	|F|  & = \sum_{k=1}^{\frac{n}{3}}|F\cap R_k|=\binom{\frac{2}{3}n}{\frac{n}{3}-1}+\sum_{k=2}^{\frac{n}{3}}2k\binom{\frac{2}{3}n}{\frac{n-3k}{3}}\\
	& =\binom{\frac{2}{3}n}{\frac{n}{3}-1}+\sum_{j=0}^{\frac{n}{3}-2}2(\frac{n}{3}-j)\binom{\frac{2}{3}n}{j} \text{ (let $j=\frac{n}{3}-k)$}\\
	& =\binom{\frac{2}{3}n}{\frac{n}{3}-1}+\frac{2}{3}n\sum_{j=0}^{\frac{n}{3}-2}\binom{\frac{2}{3}n}{j}-2\sum_{j=0}^{\frac{n}{3}-2}j\binom{\frac{2}{3}n}{j}\\
	& =\binom{\frac{2}{3}n}{\frac{n}{3}-1}+\frac{2}{3}n\sum_{j=0}^{\frac{n}{3}-2}\binom{\frac{2}{3}n}{j}-2\sum_{j=0}^{\frac{n}{3}-2}\frac{2}{3}n\binom{\frac{2}{3}n-1}{j-1}\\
	& =\binom{\frac{2}{3}n}{\frac{n}{3}-1}+\frac{2}{3}n\left[\frac{2^{\frac{2n}{3}}-\binom{\frac{2}{3}n}{\frac{n}{3}}}{2}-\binom{\frac{2}{3}n}{\frac{n}{3}-1}\right]-\frac{4}{3}n\left[\frac{2^{\frac{2}{3}n-1}}{2}-\binom{\frac{2}{3}n-1}{\frac{n}{3}-1}-\binom{\frac{2}{3}n-1}{\frac{n}{3}-2}\right]\\
	& =\binom{\frac{2}{3}n}{\frac{n}{3}-1}+\frac{n}{3}\left[-\binom{\frac{2}{3}n}{\frac{n}{3}} -2\binom{\frac{2}{3}n}{\frac{n}{3}-1}+4\binom{\frac{2}{3}n-1}{\frac{n}{3}-1}+4\binom{\frac{2}{3}n-1}{\frac{n}{3}-2}\right]\\
	& =\frac{n}{3}\binom{\frac{2}{3}n}{\frac{n}{3}-1}.
	\end{align*}
	
	Now, the only thing left to check is that $f_{n,3}=|F|$ if $3|n$. This can be done by noticing the following facts.
	
	\begin{fact}\thlabel{fact3}
		When $3\nmid m$, we have
		\begin{equation}
		[m]_q(u)=1+q+\cdots +q^{m-1}=\left\{
		\begin{array}{ll}
		1 & \text{ if } m\equiv 1\pmod 3\\
		1+u & \text{ if } m\equiv 2\pmod 3
		\end{array}
		\right.
		\end{equation}
		
		Consequently, for any two positive integers $m_1$ and $m_2$ such that $3\nmid m_1$ and $3\nmid m_2$, we have
		\begin{equation}
		\frac{[m_1]_q}{[m_2]_q}(u)=1.
		\end{equation}
	\end{fact}
	
	\begin{fact}\thlabel{fact4}
		When $3|m$,
		\begin{equation}
		[m]_q=(1+q+q^2)(1+q^3+q^6+\cdots +q^{m-3})
		\end{equation}
		Consequently, 
		\begin{equation}
		[m]_q(u)=0,
		\end{equation}
		and for any two positive integers $m_1$ and $m_2$ such that $3|m_1$ and $3|m_2$, we have
		\begin{equation}
		\frac{[m_1]_q}{[m_2]_q}(u)=\frac{1+u^3+u^6+\cdots +u^{m_1-3}}{1+u^3+u^6+\cdots +u^{m_2-3}}=\frac{m_1/3}{m_2/3}.
		\end{equation}
	\end{fact}
	
	Now, we calculate $f_{n,3}(u)$ for $3|n$. Note that 
	
	\begin{align*}
	f_{n,3}(q)& =\frac{1}{[3]_q} \binomial{n+5}{2}_q \binomial{2n}{n-4}_q+\binomial{2n}{n-3}_q\\
	& =\frac{[n+5]_q[n+4]_q}{[2]_q[1]_q}\cdot \frac{[2n]_q[2n-1]_q[2n-2]_q\cdots [n+6]_q[n+5]_q}{[3]_q[n-4]_q[n-5]_q\cdots [3]_q[2]_q}+\frac{[2n]_q[2n-1]_q\cdots [n+4]_q}{[n-3]_q[n-4]_q\cdots [1]_q}\\
	\end{align*}

	By using \thref{fact3} and \thref{fact4}, we have
	
	\begin{align*}
	f_{n,3}(u) & =\frac{[2n]_q}{[3]_q}\cdot \frac{[2n-3]_q}{[n-6]_q}\cdot \frac{[2n-6]_q}{[n-9]_q}\cdots \frac{[n+6]_q}{[3]_q}+\frac{[2n]_q[2n-3]_q\cdots [n+6]_q}{[n-3]_q[n-6]_q\cdots [3]_q}\\
	& =\frac{\frac{2n}{3}}{1}\cdot \frac{\frac{2n}{3}-1}{\frac{n}{3}-2}\cdot \frac{\frac{2n}{3}-2}{\frac{n}{3}-3}\cdots \frac{\frac{n}{3}+2}{1}+\frac{\frac{2n}{3}(\frac{2n}{3}-1)\cdots (\frac{n}{3}+2)}{(\frac{n}{3}-1)(\frac{n}{3}-2)\cdots 1}\\
	& =(\frac{n}{3}-1)\binom{\frac{2}{3}n}{\frac{n}{3}-1}+\binom{\frac{2}{3}n}{\frac{n}{3}-1}=\frac{n}{3}\binom{\frac{2}{3}n}{\frac{n}{3}-1}\\
	& = |\{\tau\in P_{n,3}|\sigma_{2n}^{\frac{2n}{3}}(\tau)=\tau\}| =|F|.
	\end{align*}
	
	Using the same approach, we can also check that $f_{n,3}(u^2)=|F|$ when $3|n$, and $f_{n,3}(u)=f_{n,3}(u^2)=0$ when $3\nmid n$, which finishes the proof of establishing the CSP of three crossing matchings. 
	
	\end{proof}
	
	\subsection{Matchings with More Number of Crossings}
	As we have proved the cyclic sieving phenomenon, it is natural to conjecture about such phenomenon for the matchings with more crossings. This opens the work of finding appropriate polynomials $f_{n,k}$ for $k \geqslant 4$ such that $(P_{n,k},f_{n,k},C_{2n})$ exhibits the cyclic sieving phenomenon.  
	
	\section{Acknowledgment}
	This paper is based on the work of 2016 summer REU program at the University of Michigan-Ann Arbor and some following work. The REU program was advised by Professor Thomas Lam. The authors would like to thank Professor Lam for many useful discussions and suggestions about this paper, and the financial support from the Department of Mathematics of the University of Michigan-Ann Arbor. The authors would also like to thank Professor Prasad Tetali for his suggestions about this paper.


\end{document}